\DeclareMathOperator{\rank}{rank}
\newtheorem{theo}{Theorem}[section]
\newtheorem{coro}[theo]{Corollary}
\newtheorem{lemm}[theo]{Lemma}
\newtheorem{prop}[theo]{Proposition}
\theoremstyle{remark}
\newtheorem{rema}[theo]{Remark}
\newtheorem{defi}[theo]{Definition}
\title{Unitriangular factorization of holomorphic symplectic matrices}
\author[G. Huang]{Gaofeng Huang}
\author[F. Kutzschebauch]{Frank Kutzschebauch}
\author[B. Tran]{Phan Quoc Bao Tran}
\address{Mathematisches Institut \\ University of Bern \\  
Bern, Switzerland}
\email{gaofeng.huang@unibe.ch, frank.kutzschebauch@unibe.ch, phan.tran@unibe.ch}
\subjclass{32Q56 (primary); 32Q28, 15A54, 32A17,
20H25 (secondary)}
\keywords{Oka principle, Stein spaces, symplectic matrices, unipotent factorization, elementary
symplectic matrices, rings of holomorphic functions}
\begin{document}

\maketitle

\begin{abstract}
We prove that every holomorphic 
symplectic matrix can be factorized as a product of holomorphic unitriangular matrices with respect to the symplectic form $ \left[\begin{array}{ccc} 0 & L_n \\ -L_n & 0\end{array}\right]$ 
where $L$ is the $n \times n$ matrix with $1$ along the skew-diagonal.
Also we prove that holomorphic unitriangular matrices with respect to this symplectic form  are products of not more than $7$ holomorphic unitriangular matrices with respect to the standard symplectic form $\left[\begin{array}{ccc} 0 & I_n \\ -I_n & 0\end{array}\right]$, thus solving an open problem posed in \cite{HKS}.
Combining these two results allows for estimates of the optimal number of factors in the factorization by  holomorphic unitriangular matrices with respect to the standard symplectic form. The existence of that factorization was  obtained earlier by Ivarsson-Kutzschebauch and Schott, however without any estimates. 
Another byproduct of our results is a new, much less technical and more elegant proof of this factorization.
\end{abstract}

\tableofcontents
\section{Introduction}
Factorization of a given matrix with elements in a field  has been studied widely with many applications to different fields of sciences. The most well-known one is the factorization of a matrix $A \in \mathrm{SL}_n(\mathbb{C})$ into product of some elementary matrices of the form $E + \alpha e_{ij}$, $i \neq j$, called elementary matrices. This factorization is equivalent to factorization into so-called unitriangular matrices (triangular matrices with 1s on the diagonal). Another interesting class of matrices is the class of symplectic matrices.  
Generalizing the case of matrices with entries from a field, it is much more challenging to consider matrices with entries from a ring, the subject of $K$-theory. In this paper we always work
 with  very specific rings, namely the ring of holomorphic functions on a Stein space or the ring of continuous functions on a normal finite dimensional topological space. Recall that the  unitriangular symplectic matrices  with respect to the standard symplectic form
\begin{align*}
    \Omega = \left[\begin{array}{ccc}
        0 & I_n \\
        -I_n & 0
    \end{array}\right]
\end{align*}
are of the following form
\begin{center}
     $M^-(G) = \left[\begin{array}{ccc} I_n & 0 \\ G & I_n\end{array}\right]$ and $M^+(G)=\left[\begin{array}{ccc} I_n & G \\ 0 & I_n\end{array}\right]$, \text{ where $G$ is symmetric.} 
\end{center}
Since the entries of $G$ are holomorphic functions on a reduced Stein space, it can be understood as a holomorphic map:
\begin{center}
    $G: X \to \mathbb{C}^{\frac{n(n+1)}{2}}$.
\end{center}
In \cite{IKL} (for $n=2$)  and \cite{Scho} (for general $n$), the following theorem about factorizing matrices into product of  unitriangular symplectic matrices  has been proved:

\begin{theo}\label{old-theo}
There exists a natural
number $K = K(n, d)$ such that given any finite dimensional reduced Stein space $X$ of dimension
$d$ and any null-homotopic holomorphic map $f : X \to \mathrm{Sp}_{2n}(\mathbb{C})$ there exists a holomorphic map:
\begin{center}
$G = (G_1, \cdots, G_K) : X \to (\mathbb{C}^{\frac{n(n+1)}{2}} )^K
$    
\end{center}
such that
\begin{center}
 $f(x) = M^-(G_1(x)) M^+(G_2(x)) 
 \cdots M^{\pm}(G_K(x)),$ \text{ where $x \in X$.}    
\end{center}
\end{theo}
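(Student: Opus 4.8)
Since the abstract already isolates two independent ingredients, the plan is to obtain Theorem~\ref{old-theo} by \emph{composing} them, rather than by attacking the standard form head-on as in \cite{IKL,Scho}. Write $\Omega_L=\left[\begin{smallmatrix} 0 & L_n \\ -L_n & 0\end{smallmatrix}\right]$. The reason to pass to $\Omega_L$ is that, $L_n$ being the reversal matrix, the standard complete flag $\langle e_1\rangle\subset\langle e_1,e_2\rangle\subset\cdots$ is adapted to $\Omega_L$; hence the matrices that are $\Omega_L$-symplectic \emph{and} honestly unitriangular form the unipotent radical of an honest Borel subgroup --- a large, flexible subgroup of the ordinary unitriangular group. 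Factorization into such matrices is therefore a parametric, symplectically constrained Gaussian elimination, which is the situation in which the Gromov--Oka machinery runs with a minimum of bookkeeping; the standard-form matrices $M^\pm(G)$, being only $2\times2$-block unitriangular, are far less convenient, which is exactly what made the earlier treatments technical.

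\textbf{Step 1 (the main input).} Produce $N=N(n,d)$ such that every null-homotopic holomorphic $f\colon X\to\mathrm{Sp}_{2n}(\mathbb C)$ on a reduced Stein space $X$ of finite dimension $d$ is a product of at most $N$ holomorphic $\Omega_L$-unitriangular symplectic matrices. I would prove this by the Gromov--Vaserstein scheme: induction on $d=\dim X$, combined with Gromov's Oka principle. In the inductive step one realizes length-$K$ factorizations of $f$ as the holomorphic sections over $X$ of the fibration obtained by pulling back, along $f$, the algebraic map $\mathbb C^{m}\to\mathrm{Sp}_{2n}(\mathbb C)$ sending $K$ parameters to the product of the corresponding elementary symplectic matrices. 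Over the locus of $X$ where this pulled-back map is a submersion with elliptic fibres, a continuous section --- which exists because $f$ is null-homotopic and the algebraic map is surjective --- is deformed to a holomorphic one by the Oka principle for sections of stratified elliptic submersions; over the complementary analytic subset, of dimension $<d$, one applies the induction hypothesis and patches the two. The quantitative algebraic backbone that keeps $K$ bounded uniformly in $X$ is a bounded-generation statement for the elementary symplectic group over rings of finite Bass stable rank (of Vaserstein--Suslin--Kopeiko type), and the honestly triangular description of the $\Omega_L$-unitriangular matrices is what makes both this reduction and the verification of the submersion/ellipticity property routine.

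\textbf{Step 2 (conversion).} Prove the dimension-independent matrix identity that a single $\Omega_L$-unitriangular symplectic matrix is a product of at most $7$ standard $\Omega$-unitriangular symplectic matrices $M^\pm(G)$: write the triangular symplectic matrix in $2\times2$ blocks, peel off the off-diagonal blocks by repeated use of identities of the shape $\left[\begin{smallmatrix} I & 0 \\ C & I\end{smallmatrix}\right]\left[\begin{smallmatrix} I & B \\ 0 & I\end{smallmatrix}\right]\cdots$ while tracking the symmetry constraints imposed by $\Omega_L$, and absorb the residual reversal datum into a short fixed word in the $M^\pm$; careful bookkeeping gives the bound $7$. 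Composing Steps~1 and~2, $f$ is a product of at most $7N(n,d)$ matrices $M^\pm(G)$, which is Theorem~\ref{old-theo} with $K(n,d)=7\,N(n,d)$.

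The main obstacle lies entirely in Step~1, and within it in the two recurring hard points of the Gromov--Vaserstein method: (i) arranging the parametrized elementary-factorization map so that it is a stratified elliptic submersion --- equivalently, so that its fibres are Oka varieties --- so that Gromov's principle applies; and (ii) getting the \emph{uniform} bound on the number of factors, which needs a genuinely quantitative form of the symplectic stable-range results, not merely the equality $\mathrm{Sp}_{2n}=\mathrm{ESp}_{2n}$ over the relevant rings. Switching to $\Omega_L$ does not remove these difficulties, but it does remove nearly all of the block-matrix bookkeeping that made the original argument for Theorem~\ref{old-theo} so technical --- which is the simplification advertised in the abstract.
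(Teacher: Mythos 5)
Your overall architecture is exactly the paper's: Theorem \ref{old-theo} is obtained by composing the $\tilde\Omega$-factorization (Theorem \ref{main-theo}) with a bounded conversion of each $\tilde\Omega$-unitriangular factor into at most $7$ standard factors (Theorem \ref{fact-ele}), giving $K=7N$. But both steps, as you sketch them, skip over the point where the work actually happens. In Step 2 you propose to ``peel off the off-diagonal blocks.'' In $\Omega$-block form a lower $\tilde\Omega$-unitriangular matrix is $M^-(A,Z)=\left[\begin{smallmatrix} I&0\\ Z&I\end{smallmatrix}\right]\left[\begin{smallmatrix} A^{-T}&0\\ 0&A\end{smallmatrix}\right]$: the off-diagonal part is already a single standard factor, and the whole difficulty is the \emph{block-diagonal} matrix $\mathrm{diag}(A^{-T},A)$, which has no off-diagonal blocks to peel. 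Over $\mathbb{C}$ such matrices are handled via Jordan form (\cite{JTZ}, \cite{JLX2}), but Jordan form is discontinuous, so this fails over $\mathcal{O}(X)$. The missing idea is the paper's Lemma \ref{fact-ele-lemm}: multiply $A$ by a constant diagonal $\Lambda$ with pairwise distinct entries so that $\Lambda^{-1}A$ is \emph{holomorphically} diagonalizable by a unitriangular matrix, split off $\mathrm{diag}(\Lambda,\Lambda^{-1})$ as a constant $4$-factor word, write $\Lambda^{-1}A=P_1P_2$ with $P_1,P_2$ symmetric, and apply the identity of \cite{JLX2}. ``Careful bookkeeping'' of block identities alone will not yield the bound $7$.

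In Step 1, the induction in the Gromov--Vaserstein scheme is on the matrix size $n$ (project via $\pi_{2n}$ to the last row, lift, and reduce to $\mathrm{Sp}_{2n-2}$ by linear algebra), not on $d=\dim X$, and there is no ``patching over a lower-dimensional analytic subset of $X$'' --- factorizations over an analytic subset and over its complement cannot be glued, and no step of that kind appears in the argument. The non-submersive locus $S_K$ lives in the \emph{source} of the product map $\Phi_K$ (Proposition \ref{SingSet}); it is handled by arranging the continuous lift to avoid it (Proposition \ref{sup-main}) and by stratifying the \emph{target} $\mathbb{C}^{2n}\setminus\{0\}$ so that the Oka principle for stratified elliptic submersions applies (Proposition \ref{reg-locus}). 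The uniform bound on the number of factors comes from the continuous (topological) Vaserstein solution feeding the recursion $K(n,d)\le K_{cont}(n,d)+K(n-1,d)+3$, not from a Bass-stable-rank bounded-generation theorem. Finally, the simplification the paper advertises is not generic ``less bookkeeping'' from triangularity but the specific reduction of the $2n$ fiber equations of $\Phi_K$ to a single equation (Lemmas \ref{red-1} and \ref{red-2}), which is what makes the ellipticity verification short; your sketch does not identify this mechanism.
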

It is immediate that any product of unitriangular symplectic matrices is connected by a path to the constant identity matrix $I_{2n}$, by multiplying the off-diagonal entries by $t \in [0, 1]$, i.e.
\begin{center}
    $\left[\begin{array}{ccc} I_n & tG \\ 0 & I_n\end{array}\right]$ and $\left[\begin{array}{ccc} I_n & 0 \\ tG & I_n\end{array}\right]$.
\end{center}
Therefore the requirement of null-homotopy of the map $f$ is necessary.

In our paper, we study factorization of a holomorphic symplectic matrix into unitriangular symplectic factors with respect to the symplectic form 
\begin{center}
    $\tilde{\Omega} = \left[\begin{array}{ccc}
        0 & L_n \\
        -L_n & 0
    \end{array}\right]$, where $L$ is the $n \times n$ matrix with $1$ along the skew-diagonal.
\end{center}
The lower unitriangular $\tilde{\Omega}$-symplectic factors are the $\Omega$-symplectic matrices of the form
\begin{center}
    $\left[\begin{array}{ccc} A^{-T} & 0 \\ B & A\end{array}\right],$  $B A^T$ symmetric and $A$ upper unitriangular,
\end{center} 
In order to introduce convenient coordinates $(A,Z) \in \mathbb{C}^{\frac{n(n-1)}{2}} \times  \mathbb{C}^{\frac{n(n+1)}{2}}$ on this set of matrices we  will write them as
\begin{center}
    $M^{-}(A,Z)  = \left[\begin{array}{ccc} I_n & 0 \\ Z & I_n\end{array}\right] \left[\begin{array}{ccc} A^{-T} & 0 \\ 0 & A\end{array}\right]= \left[\begin{array}{ccc} A^{-T} & 0 \\ B & A\end{array}\right],$ $Z = B A^T$ symmetric and $A$ upper unitriangular.
\end{center}
The same for upper unitriangular
\begin{center}
    $ M^{+}(A,Z)  = \left[\begin{array}{ccc} I_n & Z \\ 0 & I_n\end{array}\right] \left[\begin{array}{ccc} A^{-1} & 0 \\ 0 & A^T \end{array}\right] = \left[\begin{array}{ccc} A^{-1} & C \\ 0 & A^{T}\end{array}\right],$ $ Z = C A^{-T}$ symmetric and $A$ upper unitriangular.
\end{center}

Throughout the paper, we call these matrices the \textbf{elementary matrices}. 
Our main theorem is a solution to the Gromov-Vaserstein problem for the symplectic form $\tilde \Omega$.

\begin{theo}\label{main-theo} 
There exists a natural
number $K = K(n, d)$ such that given any finite dimensional reduced Stein space $X$ of dimension
$d$ and any null-homotopic holomorphic map $f : X \to \mathrm{Sp}_{2n}(\mathbb{C})$ there exist holomorphic maps:
\begin{center}
$A = (A_1, \cdots, A_K) : X \to (\mathbb{C}^{\frac{n(n-1)}{2}} )^K
$  and $Z = (Z_1, \cdots, Z_K) : X \to (\mathbb{C}^{\frac{n(n+1)}{2}} )^K
$ 
\end{center}
such that
\begin{center}
 $f(x) = M^-(A_1(x), Z_1(x)) M^+(A_2(x), Z_2(x)) 
 \cdots M^{\pm}(A_K(x), Z_K(x)),$ \text{ where $x \in X$.}    
\end{center}
\end{theo}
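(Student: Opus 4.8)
This is a Gromov--Vaserstein problem, so I would run the Oka-theoretic scheme of Ivarsson--Kutzschebauch and Schott but with the elementary (i.e.\ $\tilde\Omega$-unitriangular) matrices $M^{\pm}(A,Z)$ throughout; the payoff is that the argument becomes both shorter and quantitative. The null-homotopy of $f$ is the only topological input, and since $\mathrm{Sp}_{2n}(\mathbb{C})$ deformation retracts onto its maximal compact subgroup, it forces the vanishing of every obstruction class occurring below. The induction is on $n$: given $f\colon X\to\mathrm{Sp}_{2n}(\mathbb{C})$, its first column is a null-homotopic holomorphic map $v\colon X\to\mathbb{C}^{2n}\setminus\{0\}$; since the symplectic group acts transitively on $\mathbb{C}^{2n}\setminus\{0\}$, the goal of one step is to produce a holomorphic product $E$ of boundedly many elementary matrices with $Ee_1=v$. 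Then $E^{-1}f$ fixes $e_1$, a similar clean-up of the symplectically conjugate row costs a further bounded number of factors, and what is left is a null-homotopic map into $\mathrm{Sp}_{2n-2}(\mathbb{C})$ for the analogous form; after $\sim n$ such steps (a standard stabilization enlarging $n$ disposes of any awkwardness at small $n$) the induction closes, and $K=K(n,d)$ is the sum of the step costs, with $d=\dim X$.

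\textbf{The one-equation reduction --- the crux.} Solving $Ee_1=v$ looks like a system of $2n$ scalar equations, and unwinding this system with the meagre $\Omega$-unitriangular matrices is exactly what makes the existing proofs technical. Here the coordinates $(A,Z)$ earn their keep: each elementary matrix carries a full upper-unitriangular block $A$ --- that is, $\binom{n}{2}$ parameters on top of the symmetric $Z$ --- and (a one-line linear-algebra fact) $M^{-}$ and $M^{+}$ may be written with the diagonal factor $\mathrm{diag}(A^{-T},A)$, resp.\ $\mathrm{diag}(A^{-1},A^{T})$, placed either before or after the unipotent factor. Using this extra freedom one solves away $2n-1$ of the $2n$ equations explicitly and is left with a single equation for the remaining unknown. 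Arranging the coordinates so that this elimination genuinely goes through --- so that the other components become formal consequences --- is the heart of the matter and the new point; it is a delicate but wholly elementary computation, and the eventual value of $K(n,d)$ is pinned down here. I expect this reduction to be the main obstacle.

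\textbf{Solving the single equation.} The surviving equation amounts to finding a holomorphic section of a bundle over $X$ whose fibre is Oka --- in the simplest situations an affine space, carrying dominating sprays --- and, because the factorization ansatz fixes the number of factors in advance, such a holomorphic section is precisely the desired bounded product of elementary matrices. One then invokes the Oka principle: depending on the exact shape of the surviving equation this may be as soft as Cartan's Theorem~B, or may call for Gromov's Oka principle for sections of stratified elliptic submersions (Gromov; Forstneri\v{c}--Prezelj), with the null-homotopy of $f$ supplying the continuous section that is then deformed to a holomorphic one. The one point needing care is that $X$ is only a reduced Stein space, which forces the stratified version of the principle and a spray check stratum by stratum; this is routine in this circle of ideas.

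\textbf{Remark.} Were one willing to quote Theorem~\ref{old-theo} the statement would be immediate, since $M^{-}(G)=M^{-}(I_n,G)$ and $M^{+}(G)=M^{+}(I_n,G)$, so an $\Omega$-unitriangular factorization of $f$ is literally a $\tilde\Omega$-unitriangular one with the same number of factors. That shortcut is of no use here: the whole purpose of proving Theorem~\ref{main-theo} directly is to reverse the implication and recover Theorem~\ref{old-theo}, now with explicit bounds, by writing each elementary matrix as a product of at most seven $\Omega$-unitriangular matrices.
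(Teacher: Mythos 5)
Your proposal follows essentially the same route as the paper: induction on $n$, projection of the product map $\Psi_K$ onto a single row/column of the matrix (the paper uses the last row rather than the first column), reduction of the resulting $2n$ fiber equations to one single equation using the extra freedom in the coordinates $(A,Z)$ — which the paper, like you, identifies as the crux — and the Oka principle for stratified elliptic submersions, with the continuous lift supplied by the topological Vaserstein-type solution and the stratification living on the target $\mathbb{C}^{2n}\setminus\{0\}$. The one ingredient of the paper's argument you omit is that $\Phi_K$ fails to be a submersion on an explicit singular set $S_K$, so before invoking the Oka principle the continuous lift must be modified (the paper inserts two harmless extra factors) to have image in the complement of $S_K$.
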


This theorem together  with our second main result, saying that the unitriangular $\tilde{\Omega}$-symplectic matrices from Theorem \ref{main-theo} are products of not more than 8 
unitriangular $\Omega$-symplectic matrices, gives a new proof of Schott's result.  Moreover our proof is much more elegant and much easier to follow. In fact we use a novel strategy to prove the stratified ellipticity of a certain map, which is the core of the proof. More precisely, we can reduce the number of equations for the fibers of this map, given from the beginning by $2n$ polynomial equations, to one single equation. From this equation
the stratified ellipticity is as easily obtained as in the work of Ivarsson and Kutzschebauch \cite{KI}. In Schott's proof (and in the preceding work of Ivarsson-Kutzschebauch-L\o w)
the reduction is only done from $2n$ to $n$ equations. Studying fibers given by $n$ polynomial equations is the  hard and unpleasant technical part of the papers \cite{IKL}, \cite{Scho}.

We denote by $K_{symp}(n,d)$ the minimal number
of unitriangular factors for $2n \times 2n$ $\Omega$-symplectic matrices with holomorphic functions as entries in any $d$-dimensional Stein space and
$\tilde K_{symp}(n,d)$ the same for  $\tilde \Omega $-symplectic  factors. Observe that the optimal number of factors over the field of complex numbers is classically known in the $\tilde\Omega$-case over fields (and rings of Bass stable rank 1)  to be $4$, however, the optimal number for the $\Omega$-case has been only very recently found to be $5$ by  Jin, Tang and Zhu  in \cite{JTZ}. As  such proofs over the complex numbers usually do, their proof depends on Jordan normal form. It is the fact that Jordan normal form is discontinuous making the problem for continuous and holomorphic matrices so delicate. However, we can reduce the problem to a simpler case by multiplying the unitriangular matrix $A$ with a diagonal matrix $\Lambda$ with pairwise different elements on the diagonal. This new matrix $\Lambda A$ is diagonalizable and the diagonalization process is algebraic. From that point on  we can apply the factorization technique in \cite{JTZ} to get the optimal results as follows:

\begin{theo}\label{main-theo2}
For $n \ge 4$ we have
$$6 \le K_{symp}(n,d) \le 7 \tilde K_{symp}(n,d).$$
For matrices of smaller sizes, there is a better upper bound
\begin{align} \label{K-Ktilde}
    K_{symp}(n,d) \le 4 \tilde K_{symp}(n,d), \quad \text{ for } n = 2 \text{ and } n=3.
\end{align}
\end{theo}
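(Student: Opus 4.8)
The plan is to prove the two halves of the inequality separately; the upper bounds carry the substantial content. For them I would first use Theorem~\ref{main-theo} to reduce to a local, matrix-theoretic statement. By that theorem $\tilde K_{symp}(n,d)$ is finite, and every null-homotopic holomorphic $f\colon X\to\mathrm{Sp}_{2n}(\mathbb C)$ on a $d$-dimensional reduced Stein space $X$ is a product of $\tilde K_{symp}(n,d)$ elementary $\tilde\Omega$-symplectic matrices; hence it suffices to write a single elementary matrix $M^{\pm}(A,Z)$, with $A,Z$ holomorphic on $X$, as a product of at most $7$ (resp.\ $4$ when $n=2,3$) unitriangular $\Omega$-symplectic matrices with holomorphic entries, and then substitute into the factorization of $f$. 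Since $M^{-}(A,Z)=M^{-}(Z)\operatorname{diag}(A^{-T},A)$ and the first factor $M^{-}(Z)$ is already a unitriangular $\Omega$-symplectic matrix, everything reduces to writing $D(A):=\operatorname{diag}(A^{-T},A)$, with $A$ upper unitriangular, as a product of at most $6$ (resp.\ $3$) unitriangular $\Omega$-symplectic matrices; the case of $M^{+}$ is entirely analogous (with $A$ replaced by $A^{T}$).

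The heart of the argument is the holomorphic factorization of $D(A)$. Over $\mathbb C$, \cite{JTZ} writes every symplectic matrix, in particular $D(A)$, as a product of at most $5$ unitriangular $\Omega$-symplectic matrices, but their procedure passes through a canonical form of $A$ and is therefore not holomorphic in $A$. The decisive point is that, $A$ being \emph{unitriangular}, for any constant diagonal matrix $\Lambda=\operatorname{diag}(\lambda_1,\dots,\lambda_n)$ with pairwise distinct nonzero entries the matrix $\Lambda A$ is upper triangular with spectrum $\{\lambda_1,\dots,\lambda_n\}$, hence regular semisimple with a \emph{fixed} spectrum; consequently a matrix $P(A)$ conjugating $\Lambda A$ to $\operatorname{diag}(\lambda_1,\dots,\lambda_n)$ can be chosen to depend rationally --- in particular holomorphically --- on the entries of $A$. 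Since $A\mapsto\operatorname{diag}(A^{-T},A)$ is a group homomorphism, $D(A)=D(\Lambda^{-1})\,D(\Lambda A)$ with $D(\Lambda^{-1})$ a \emph{constant} symplectic matrix; applying the construction of \cite{JTZ} to $D(\Lambda A)$, all of whose auxiliary data now vary holomorphically with $A$, and then keeping careful track of the factors --- in particular merging adjacent factors of equal type at the junctions --- yields $D(A)$, and hence $M^{\pm}(A,Z)$, as a holomorphic product of at most $7$ unitriangular $\Omega$-symplectic matrices for $n\ge 4$. When $n=2$ or $n=3$ the matrix $A$ is small enough that \cite{JTZ} is not needed at all: a direct computation factors $D(A)$ into at most $3$ such matrices, yielding the bound $4$.

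For the lower bound, $K_{symp}(n,d)\ge 5$ is immediate: by \cite{JTZ} there is $M_0\in\mathrm{Sp}_{2n}(\mathbb C)$ that is not a product of $4$ unitriangular $\Omega$-symplectic matrices, and evaluating any hypothetical factorization of the constant map $f\equiv M_0$ at a point of $X$ would contradict this. To push the bound up to $6$ one has to use the holomorphic, rather than pointwise, nature of the factorization: I would construct, on a suitably chosen $d$-dimensional Stein space, a null-homotopic holomorphic map $f$ whose values meet the proper analytic subset of $\mathrm{Sp}_{2n}(\mathbb C)$ formed by the matrices not representable as a product of $5$ unitriangular $\Omega$-symplectic matrices of the prescribed alternating pattern, along a set which a putative $5$-term holomorphic factorization could not accommodate --- forcing, at certain points of $X$, a factorization over $\mathbb C$ shorter than \cite{JTZ} permits.

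I expect the main obstacle to be the second step: converting the field-theoretic factorization of \cite{JTZ}, which rests on the discontinuous canonical form of $A$, into a holomorphic one \emph{while keeping the number of factors as small as $7$}. Replacing $A$ by $\Lambda A$ removes the discontinuity, but one then has to keep precise account of the factors contributed by the constant correction $D(\Lambda^{-1})$, of those produced by the conjugations built into the \cite{JTZ} construction, and of exactly which adjacent factors merge, so as not to lose the sharp constant; the low-dimensional cases $n=2,3$ require their own, elementary but delicate, computation, since the general argument would not by itself improve on $7$ there. The lower bound, by contrast, is comparatively soft once \cite{JTZ} is in hand.
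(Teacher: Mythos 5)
Your treatment of the two upper bounds is essentially the paper's own argument: reduce via Theorem \ref{main-theo} to factoring a single $\tilde\Omega$-unitriangular matrix $M^{\pm}(A,Z)$, peel off the unitriangular $\Omega$-factor $M^{\mp}(Z)$, and handle $\operatorname{diag}(A^{-T},A)$ by multiplying $A$ with a constant diagonal $\Lambda$ with pairwise distinct entries so that $\Lambda^{-1}A$ becomes holomorphically (indeed rationally) diagonalizable by a unitriangular matrix; one then applies the Jin--Lin--Xiao/Jin--Tang--Zhu four-factor decompositions to the two diagonal blocks and merges adjacent factors of equal type to land on $7$ (and a direct computation gives $4$ for $n=2,3$). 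Your bookkeeping is slightly loose (you first ask for $6$ factors for $D(A)$ and later concede $7$; the paper in fact uses $7$ for $D(A)$, starting with a lower factor that merges with $M^{-}(Z)$), but the mechanism and the constants are the same.

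The genuine gap is the lower bound $6\le K_{symp}(n,d)$ for $n\ge 4$. Constant matrices can never give more than $5$, since by \cite{JTZ} every matrix in $\mathrm{Sp}_{2n}(\mathbb{C})$ \emph{is} a product of $5$ unitriangular factors; so your evaluation-at-a-point argument stops at $5$, and the step from $5$ to $6$ is exactly the content of the claim. Your proposed route --- forcing a holomorphic lift through the ``bad set'' of matrices not representable by $5$ factors in the prescribed pattern --- is not carried out, and it is not even clear that this set is a proper analytic subset with the transversality properties you would need. The paper's mechanism is different and concrete: it shows (Remark \ref{fact-num-theo1}(b)) by direct computation on the defining equations that for $n\ge 4$ the elementary matrices $M^{\pm}(A,Z)$ themselves, i.e.\ holomorphic block-triangular symplectic families, admit no factorization into $5$ holomorphic unitriangular $\Omega$-symplectic factors; since such a family is null-homotopic, this immediately forces $K_{symp}(n,d)\ge 6$. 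Without an argument of this kind your proof establishes only $5\le K_{symp}(n,d)\le 7\tilde K_{symp}(n,d)$ for $n\ge4$, not the stated theorem.
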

This theorem solves the problem posed in \cite[p. 6]{HKS}.
Moreover, combining Theorem \ref{main-theo2}
with $K$-theoretic results from \cite{VSS},\cite{HKS}
we are in certain cases able to give explicit bounds for the number of factors in Schott's theorem as follows:

\begin{prop} \label{bounds}
    The number of factors for $N \ge n+1$ is bounded by the number for $n$:
    \begin{align} \label{K(N,d)}
        K_{symp}(N,d) \le 7 \tilde{K}_{symp}(n,d).  
    \end{align}
    When the Stein space $X$ is one-dimensional,
    \begin{align}
        5 &\le K_{symp} (n,1)  \le 16 \ \  \text{for} \ \ n=2,3, \label{K(n,1)n=2,3} \\
   6 &\le K_{symp} (n,1)  \le 28 \ \ \text{for } \  n \ge 4.  \label{K(n,1)n>=4}
    \end{align}
    When the Stein space $X$ is two-dimensional,
    \begin{align}   
   5 &\le K_{symp} (n,2)  \le 20 \ \ \text{for} \ \ n=2,3, \label{K(n,2)n=2,3}\\
   6 &\le   K_{symp}(n,2) \le  35  \ \  \text{for } \ n \ge 4. \label{K(n,2)n>=4}
  \end{align}
\end{prop}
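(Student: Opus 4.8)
The plan is to derive Proposition \ref{bounds} from Theorem \ref{main-theo2} by feeding in two $K$-theoretic inputs about $\tilde K_{symp}$, both quoted from \cite{VSS} and \cite{HKS}: a \emph{stabilization estimate}
\[
\tilde K_{symp}(N,d)\ \le\ \tilde K_{symp}(n,d)\qquad\text{for }N\ge n+1,
\]
and the \emph{low-dimensional values} $\tilde K_{symp}(n,1)\le 4$ and $\tilde K_{symp}(n,2)\le 5$, valid for every $n\ge 2$. (The first is symplectic surjective stability phrased with control on the number of unitriangular $\tilde\Omega$-factors; the second is the corresponding bounded-factorization statement for $\mathcal O(X)$ with $\dim X\le 2$.) Granting these, the rest of the argument is bookkeeping, so I only sketch it.

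To prove \eqref{K(N,d)} I would first observe that Theorem \ref{main-theo2} gives $K_{symp}(N,d)\le 7\,\tilde K_{symp}(N,d)$ for every $N\ge 2$: directly when $N\ge 4$, and a fortiori when $N\in\{2,3\}$, where the sharper $K_{symp}(N,d)\le 4\,\tilde K_{symp}(N,d)$ holds. Since $N\ge n+1$, the stabilization estimate gives $\tilde K_{symp}(N,d)\le\tilde K_{symp}(n,d)$, and chaining the two inequalities yields \eqref{K(N,d)}.

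For the explicit bounds I would treat the two directions separately. \emph{Upper bounds}: substitute the low-dimensional values into Theorem \ref{main-theo2}. For $n\in\{2,3\}$ this gives $K_{symp}(n,1)\le 4\cdot 4=16$ and $K_{symp}(n,2)\le 4\cdot 5=20$; for $n\ge 4$ it gives $K_{symp}(n,1)\le 7\cdot 4=28$ and $K_{symp}(n,2)\le 7\cdot 5=35$. \emph{Lower bounds}: restricting any unitriangular $\Omega$-factorization of a map $X\to\mathrm{Sp}_{2n}(\mathbb C)$ to a point $x_0\in X$ yields a unitriangular $\Omega$-factorization over $\mathbb C$ of the same length. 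Taking $X=\mathbb C$ (resp.\ $\mathbb C^2$), which is $1$-dimensional (resp.\ $2$-dimensional) reduced Stein, and $f$ the constant map at a matrix requiring at least $5$ unitriangular $\Omega$-factors over $\mathbb C$ — such a matrix exists by \cite{JTZ}, and the constant map is null-homotopic because $\mathrm{Sp}_{2n}(\mathbb C)$ is connected — shows $K_{symp}(n,1)\ge 5$ and $K_{symp}(n,2)\ge 5$ for all $n\ge 2$, hence \eqref{K(n,1)n=2,3} and \eqref{K(n,2)n=2,3}. For $n\ge 4$, the sharper lower bound $K_{symp}(n,d)\ge 6$, which is part of Theorem \ref{main-theo2} for every $d$ (in particular $d=1,2$), upgrades these to \eqref{K(n,1)n>=4} and \eqref{K(n,2)n>=4}. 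Assembling the four pairs completes the proof.

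The step requiring the most care is the stabilization estimate $\tilde K_{symp}(N,d)\le\tilde K_{symp}(n,d)$ for $N\ge n+1$: one must align the surjective-stability reduction of a size-$2N$ symplectic problem to a size-$2n$ one with an honest bound on the number of unitriangular $\tilde\Omega$-factors, and check that the reduction respects the coordinates $(A,Z)$ fixed above for the form $\tilde\Omega$. All remaining steps are direct substitutions into Theorem \ref{main-theo2}, together with the elementary restriction-to-a-point argument and the cited value from \cite{JTZ}.
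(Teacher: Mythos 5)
Your proposal is correct and follows essentially the same route as the paper: \eqref{K(N,d)} is obtained by chaining Theorem \ref{main-theo2} with the Tavgen/stability estimate for $\tilde K_{symp}$ from \cite{HKS}, the upper bounds come from substituting $\tilde K_{symp}(n,1)\le 4$ and $\tilde K_{symp}(n,2)\le 5$ into the $4\tilde K$ (for $n=2,3$) and $7\tilde K$ (for $n\ge 4$) inequalities, and the lower bounds come from the constant-matrix case \cite{JTZ} for $n=2,3$ and from Theorem \ref{main-theo2} for $n\ge 4$. The only difference is that you spell out the restriction-to-a-point argument that the paper leaves implicit in the phrase ``hold even for matrices with constant entries.''
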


\subsection{Strategy of the proof}
    We consider the product of $K$ unitriangular symplectic matrices
    \begin{center}
    $\Psi_K = M^-(A_1,Z_1) M^+(A_2, Z_2) \cdots  M^{\pm}(A_K, Z_{K})$. 
    \end{center} 
    If we naturally identify the set of upper unitriangular matrices with the set $\mathbb{C}^{\frac{n(n-1)}{2}}$, and the set of symmetric matrices with the set $\mathbb{C}^{\frac{n(n+1)}{2}}$, we can consider $\Psi_K$ as a map
    \begin{center}
    $\Psi_K: (\mathbb{C}^{\frac{n(n-1)}{2}} \times \mathbb{C}^{\frac{n(n+1)}{2}})^K \to \mathrm{Sp}_{2n}(\mathbb{C})$.
    \end{center}
     
    
Given a holomorphic map $f: X \to \mathrm{Sp}_{2n}(\mathbb{C})$ from a finite dimensional Stein space $X$ to the symplectic matrices, we need to find a holomorphic lift $F$ such that the following diagram commutes 
\begin{center}
    \begin{tikzcd}
    & & & (\mathbb{C}^{\frac{n(n-1)}{2}} \times \mathbb{C}^{\frac{n(n+1)}{2}})^K \arrow[dd,"\Psi_K"] \\ \\
    & X \arrow[uurr,"F", dashed] \arrow[rr,"f"] & & \mathrm{Sp}_{2n}(\mathbb{C})
\end{tikzcd}
\end{center}
It is very hard to study the map $\Psi_K$ directly, therefore we choose to consider its composition with the projection $\pi_{2n}$ of a $2n \times 2n$ matrix to its last row
\begin{align}   \label{def-PhiK}
    \Phi_K = \pi_{2n} \circ \Psi_K : (\mathbb{C}^{\frac{n(n-1)}{2}} \times \mathbb{C}^{\frac{n(n+1)}{2}} )^K \to \mathbb{C}^{2n}\backslash \lbrace 0 \rbrace.  
\end{align}
By finding a holomorphic lift in 
\begin{center}
    \begin{tikzcd}
    & & & (\mathbb{C}^{\frac{n(n-1)}{2}} \times \mathbb{C}^{\frac{n(n+1)}{2}})^K \arrow[dd,"\Phi_K"] \\ \\ 
    & X \arrow[rr,"\pi_{2n} \circ f"] \arrow[uurr, dashed] &  & \mathbb{C}^{2n}\setminus \{ 0\}
    \end{tikzcd}
\end{center}
we obtain a product of holomorphic elementary matrices whose last row is the same as the last row of $f$. Using standard linear algebra arguments, Theorem \ref{main-theo} follows by an induction on the size of the matrices. 

To find this holomorphic lift, we will use the Oka principle for stratified elliptic submersions. In fact, after removing the singularity set $S_K$ from its domain, the map $\Phi_K$ becomes such. 
\begin{prop} \label{prop: Phi-K}
The map $\Phi_K: (\mathbb{C}^{\frac{n(n-1)}{2}} \times \mathbb{C}^{\frac{n(n+1)}{2}})^K \setminus S_K \to \mathbb{C}^{2n}\backslash \lbrace 0 \rbrace$ is a stratified elliptic submersion.    
\end{prop}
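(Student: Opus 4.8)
The plan is to check the two ingredients in the definition of a stratified elliptic submersion: that, after deleting the critical set $S_K$ (the locus where $d\Phi_K$ fails to be surjective), the map $\Phi_K$ is a holomorphic submersion onto $\mathbb{C}^{2n}\setminus\{0\}$; and that there is a stratification $\mathbb{C}^{2n}\setminus\{0\} = \Sigma_0 \supset \Sigma_1 \supset \cdots$ by closed algebraic subsets with smooth differences, over each of which $\Phi_K$ admits a dominating composed spray on its fibers. The ellipticity is the real content; the submersion statement and the description of $S_K$ will fall out of the same computation.

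First I would write $\Phi_K$ out explicitly. With $e_{2n}^{T}$ the last-row covector and $M_1 = M^{-}(A_1,Z_1)$, $M_2 = M^{+}(A_2,Z_2)$, and so on, one has $\Phi_K(A_1,Z_1,\dots,A_K,Z_K) = e_{2n}^{T} M_1 M_2 \cdots M_K$, so prescribing the value $w \in \mathbb{C}^{2n}\setminus\{0\}$ is a system of $2n$ polynomial equations in the scalar coordinates of the $(A_j,Z_j)$. Each $M_j$ is, in our coordinates, uniquely a shear (a $2n\times 2n$ block matrix with $I_n$ on the diagonal and the symmetric block $Z_j$ in one off-diagonal corner) times a block-diagonal conjugation by $A_j$, with $A_j$ upper unitriangular. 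The crux of the argument --- and the novelty of the paper over \cite{IKL} and \cite{Scho} --- is that the prescribed alternation of $M^{-}$ and $M^{+}$, combined with this normal form, makes $2n-1$ of the $2n$ last-row equations triangular, so they can be solved explicitly and algebraically for $2n-1$ of the coordinates as holomorphic functions of the remaining ones and of $w$. Granting this one-equation reduction, over a Zariski-open stratum of the $w$-space the fiber $\Phi_K^{-1}(w)$ is biholomorphic to the product of an affine space of free parameters with the zero set of a single holomorphic function $g_w$, whose dependence on those parameters is of exactly the affine/shear type analyzed in \cite{KI}.

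It then remains, over each stratum, to produce the dominating spray, and here I would simply imitate \cite{KI}. Since $g_w$ is affine-linear (of controlled, nilpotent-triangular degree) in a large enough subset of the coordinates, one can choose vector fields $v_i$ along which $g_w$ is unchanged, and the shear maps $s(\mathbf{z},t) = \mathbf{z} + \sum_i t_i v_i(\mathbf{z})$ are then sprays tangent to the fiber; composing finitely many of them yields a composed spray that is checked to be dominating at every point of $\Phi_K^{-1}(w)$. The submersion property of $\Phi_K$ over a stratum is automatic once $2n-1$ coordinates have been expressed holomorphically in terms of the rest and of $w$, and the locus where the relevant pivots (the minors governing that elimination) degenerate is precisely what is collected into $S_K$, together with the passage from $\Sigma_k$ to $\Sigma_{k+1}$.

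The main obstacle is the one-equation reduction itself: one must verify carefully that the alternating choice of $M^{-}$ and $M^{+}$ coordinates genuinely triangularizes $2n-1$ of the last-row equations, uniformly over a Zariski-dense part of $\mathbb{C}^{2n}\setminus\{0\}$, and then read off from the vanishing of the corresponding pivots both the stratification of the target and the exact form of $S_K$. Once that is in place, the ellipticity of the resulting one-equation submersion is the routine shear-spray construction of \cite{KI}, and the submersion statement is bookkeeping with the explicit partial solution of the $2n-1$ triangular equations.
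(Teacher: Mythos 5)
Your strategy coincides with the paper's: stratify $\mathbb{C}^{2n}\setminus\{0\}$, reduce the $2n$ fiber equations to a single one over each stratum, and then run the shear-vector-field/composed-spray machinery of \cite{KI}. The problem is that the proposal stops exactly where the proof begins: you write ``granting this one-equation reduction'' and then list the reduction as ``the main obstacle \dots one must verify carefully.'' That reduction is the entire content of the proposition, and it is not a formal triangularization of the last-row equations. It requires (i) the specific stratification by the position of the first nonvanishing coordinate of the target point $w=(a,b)$ (the strata $G_i$, and the strata $N_i$ when the first block vanishes), because the pivot by which one divides is the nonzero coordinate $a_i$ (resp.\ $b_{n-i}$) of $w$ itself, not a minor in the source variables; (ii) the recursions $P^K = P^{K-1}M^{+}(A_K,Z_K)$ and $P^{K-1}=P^{K-2}M^{-}(A_{K-1},Z_{K-1})$, from which one solves explicitly for $n-i$ entries of $A_K$, then $n$ entries of $Z_K$, then $i-1$ entries of $A_{K-1}$ (total $2n-1$); and (iii) in the case $a=0$, a further change of coordinates $\tilde Z_{K-1}=A_{K-1}^{-1}Z_{K-1}A_{K-1}^{-T}$ together with the entries of $A_{K-1}^{-1}$, without which the elimination in Lemma \ref{red-2} does not go through. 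None of this is carried out in the proposal, so as written it does not prove the statement.

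Two smaller inaccuracies. First, you claim the description of $S_K$ and the submersivity off $S_K$ ``fall out of the same computation'' as the elimination; in the paper these are separate inputs — the identification of the non-submersive locus (Proposition \ref{SingSet}) is a Jacobian computation on the source, and surjectivity off $S_K$ is quoted from Schott — and the pivots of the elimination are coordinates of the \emph{target} point, so their vanishing defines the strata of the target, not $S_K$. Second, the spray part of your outline is correct and is exactly \cite[Lemmas 5.2 and 5.3]{KI}: the reduced polynomial $p$ is at most linear in each variable, so the fields $\frac{\partial p}{\partial x_i}\frac{\partial}{\partial x_j}-\frac{\partial p}{\partial x_j}\frac{\partial}{\partial x_i}$ are complete, tangent to the fiber, and span its tangent spaces; but this only becomes available once the reduction in the first paragraph has actually been established.
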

The continuous lift is constructed using the continuous factorization of symplectic matrices in \cite{IKL}.

The proof of Proposition \ref{prop: Phi-K} uses complete holomorphic vector fields tangent to the fibers of the map $\Phi_K$. We are able to reduce the $2n$ equations describing these fibers, to a single equation. This reduction in the number of defining equations greatly simplifies the search for a set of complete vector fields spanning the tangent space of the fiber at each point.

\section{The Oka principle for stratified elliptic submersions}


In this section, we recall the Oka principle that we will use as a crucial tool in our proof of Theorem \ref{main-theo} (see \cite{For1, Gro}), following the monograph of Franc Forstneri{\v c} \cite{For2}. 

Let $h\colon Z \to X$ be a holomorphic submersion of a complex manifold $Z$ onto a complex manifold $X$. For any $x\in X$ the fiber over $x$ of this submersion will be denoted by $Z_x$. At each point $z\in Z$ the tangent space $T_zZ$ contains the vertical tangent space $VT_zZ=\ker Dh$. For holomorphic vector bundles $p\colon E \to Z$ we denote the zero element in the fiber $E_z$ by $0_z$.



\begin{defi}
Let $X$ and $Z$ be complex spaces. A holomorphic map $h: Z \to X$ is said to be a submersion if for each point $z_0\in Z$ it is locally equivalent via a fiber preserving biholomorphic map to a projection $p\colon U\times V \to U$, where $U\subset X$ is an open set containing $h(z_0)$ and $V$ is an open set in some $\mathbb{C}^d$.  
\end{defi}



\begin{defi}
Let $h\colon Z \to X$ be a holomorphic submersion of a complex manifold $Z$ onto a complex manifold $X$. A fiber spray on $Z$ associated with $h$ is a triple $(E,p,s)$, where $p\colon E\to Z$ is a holomorphic vector bundle and $s\colon E\to Z$ is a holomorphic map such that for each $z\in Z$ we have 
\begin{itemize}
\item[(i)]{$s(E_z)\subset Z_{h(z)}$,}
\item[(ii)]{$s(0_z)=z$, and}
\item[(iii)]{the derivative $Ds(0_z)\colon T_{0_z}E\to T_zZ$ maps the subspace $E_z\subset T_{0_z}E$ surjectively onto the vertical tangent space $VT_zZ$.}
\end{itemize} 
\end{defi}

\begin{rema} \label{VF-spray}
    One way of constructing dominating fiber sprays, as pointed out by Gromov, is to find finitely many $\mathbb{C}$-complete vector fields that are tangent to the fibers and span the tangent space of the fibers at all points in $Z$. One can then use the flows $\varphi_j^t$ of these vector fields $V_j$ to define $s\colon Z\times \mathbb{C}^N\to Z$ via $s(z,t_1,\dots, t_N)=\varphi_1^{t_1}\circ \dots \circ \varphi_N^{t_N}(z)$ which gives a spray.
\end{rema}

\begin{defi}
We say that a holomorphic submersion $h: Z \to X$ is \textbf{stratified elliptic} if there is a descending
chain of closed complex subspaces $X = X_m \supset \cdots \supset X_0$ such that each stratum $Y_k = X_k \backslash X_{k-1}$
is regular and the restricted submersion $h: Z|_{Y_k} \to Y_k$ admits a dominating fiber spray over a small neighborhood of any point $x \in Y_k$.
\end{defi}

Let $\pi: E \to B$ be a holomorphic map of a complex space $E$ onto a complex space $B$. (All complex spaces are assumed to be reduced.) Assume that $X$ is a Stein space, $P_0 \subset P$ are compact Hausdorff spaces (the parameter spaces), and $f : P \times X \to B$ is a continuous $X$-holomorphic map, meaning that the map $f(p, \cdot): X \to B$ is holomorphic for every fixed $p \in P$. Assume that $F_0: P \times X \to E$ is a continuous lifting of $f$, i.e., $\pi \circ F_0 = f$, which is $X$-holomorphic over $P_0$. We are looking for a homotopy of liftings $F_t : P \times X \to E$ of $f$ ($t \in [0,1]$) which is fixed on $P_0 \times X$, such that $F_1 = F$ is an $X$-holomorphic lifting. The situation is illustrated with the following diagram:

\[
\begin{tikzcd}
& P_0 \times X \arrow[r, "F_0"] \arrow[d, hook, "incl"'] & E \arrow[d, "\pi"]   \\
& P \times X \arrow[ur, dashed, "F"] \arrow[r, "f"'] & B
\end{tikzcd}
\]

We say that the map $\pi: E \to B$ satisfies the \textbf{parametric Oka property} if such a homotopy $F_t$ exists for any data $(X, P_0, P, f, F_0)$. 
It can be illustrated by the following commutative diagram:

\[
\begin{tikzcd}
P_0 \arrow[d, hook, "incl"] \arrow[rr, "F_0"] & & \mathcal{O}(X, E)  \arrow[d, ""] \arrow[rr, hook, "incl"]  & & \mathcal{C}(X, E) \arrow[d, "\pi"] \\
P \arrow[urr, "F_1"] \arrow[urrrr, "\quad F_0"'] \arrow[rr, "f"'] & &\mathcal{O}(X, B) \arrow[rr, hook, "incl"'] & & \mathcal{C}(X, B)
\end{tikzcd}
\]


Note that the problem of lifting a holomorphic map $f : X \to B$ to a holomorphic map $F : X \to E$ reduces to the problem of finding a section of the pullback of $\pi: E \to B$ by $f$. 

\begin{theo} \label{POP}
    Every stratified elliptic submersion enjoys the parametric Oka property.
\end{theo}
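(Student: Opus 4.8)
The plan is to reproduce Gromov's proof of the Oka principle in the streamlined form of \cite{For2}, so I will only outline the skeleton; the full argument is a long induction. The first move is to convert the lifting problem into a \emph{section} problem: pulling $\pi\colon E\to B$ back by $f$ over $X$ (and, with parameters, over $P\times X$) replaces a lifting of $f$ by a section of a holomorphic submersion over a Stein base, and domination of fiber sprays is preserved under pullback, so the pulled-back submersion is again stratified elliptic. Thus it suffices to prove: a continuous section of a stratified elliptic submersion $h\colon Z\to X$ over a Stein space $X$, depending continuously on a parameter $p$ in a compact Hausdorff space $P$ and already holomorphic for $p$ in a closed subset $P_0\subset P$, can be deformed through such sections --- fixed over $P_0$ --- to one that is $X$-holomorphic for all $p$.

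I would organize the proof as a double induction: an outer induction on the length $m$ of the stratification $X=X_m\supset\cdots\supset X_0$, and an inner induction along a normal exhaustion of a Stein base. For the outer induction, the claim at step $k$ is that the section can be made holomorphic on a neighborhood (in $P\times X$) of $X_k$, fixed over $P_0$ and over the part already holomorphic from step $k-1$. The regular stratum $Y_k=X_k\setminus X_{k-1}$ is a complex manifold over which $h$ restricts to an elliptic submersion admitting local dominating sprays; using a Stein tubular neighborhood of $Y_k$ inside $X\setminus X_{k-1}$ --- available by the Docquier--Grauert/Siu neighborhood theorems --- together with Cartan's extension theorem to keep agreement with the section already constructed near $X_{k-1}$, the step reduces to the non-stratified elliptic case over a Stein base.

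The heart of the matter is therefore the elliptic (non-stratified) Oka principle over a Stein space, which I would prove by the classical two-step scheme. Fix a normal exhaustion $K_1\subset K_2\subset\cdots$ of $X$ by compact $\mathcal O(X)$-convex sets with $K_j\subset\operatorname{int}K_{j+1}$. Step (a), \emph{approximation}: a section holomorphic near $K_j$ can be approximated, uniformly on $K_j$ and with control in $p\in P$, by a section holomorphic on a fixed larger neighborhood --- here the dominating spray is used to ``holomorphically thicken'' the section so that the Oka--Weil theorem applies fibrewise over $X$. Step (b), \emph{gluing over a Cartan pair}: given two holomorphic sections defined near $A$ and near $B$, with $A\cup B$ a slightly larger $\mathcal O$-convex set and the two sections uniformly close over $A\cap B$, one writes their ``difference'' as $s(\xi)$ for a small section $\xi$ of the spray bundle over $A\cap B$, splits $\xi$ additively by a bounded solution of the Cousin-I problem, and corrects the nonlinearity by the implicit function theorem in the Banach spaces of bounded holomorphic sections; this produces a single holomorphic section over a neighborhood of $A\cup B$, close to both and unchanged wherever no correction was needed. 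Alternating (a) and (b) up the exhaustion, with the errors on $K_j$ chosen to decrease geometrically, yields a convergent sequence whose limit is a global holomorphic section; performing every step continuously in $p$ and trivially (no correction) over $P_0$ produces the required homotopy, and threading this non-stratified result through the outer induction on $k$ gives the full statement.

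The step I expect to be the main obstacle is the gluing lemma (b): the domination hypothesis (iii) on the fiber spray is exactly what makes the linearization of the gluing map surjective, and turning this into an honest patching device requires a Banach fixed-point/implicit-function argument with estimates that are uniform in the parameter $p$ and stable under shrinking of neighborhoods. Coordinating this estimate simultaneously along the Cartan exhaustion, across the stratification, and relative to $(P,P_0)$ --- all the while leaving the section untouched where it is already holomorphic --- is the source of the length of the complete proof, even though each individual ingredient (Oka--Weil, Cartan's Theorem B with bounds, the implicit function theorem) is classical. Since this is a known result, in the body of the paper we simply invoke it in the form of \cite{For2}.
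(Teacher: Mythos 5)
The paper does not prove this theorem at all: it is quoted as a known result from Forstneri{\v c}'s work \cite{For1, For2} (going back to \cite{Gro}), and your outline is a faithful sketch of exactly that standard argument --- pullback to a section problem, induction over the stratification, and the approximation/gluing scheme over a Cartan exhaustion driven by the dominating spray. Since you yourself conclude by simply invoking \cite{For2}, your treatment agrees with the paper's.
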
 

\section{Proof of main theorem}\label{3}
In this section we explain the inductive proof of a parametric version of Theorem \ref{main-theo}. 

\begin{theo}\label{main-theo-parametric} 
There exists a natural
number $K = K(n, d)$ such that given any compact Hausdorff space $P$, any finite dimensional reduced Stein space $X$, such that $P \times X$ has covering dimension $d$, and any null-homotopic holomorphic map $f : P \times X \to \mathrm{Sp}_{2n}(\mathbb{C})$, there exist $X$-holomorphic maps:
\begin{center}
$A = (A_1, \cdots, A_K) : P \times X \to (\mathbb{C}^{\frac{n(n-1)}{2}} )^K
$  and $Z = (Z_1, \cdots, Z_K) : P \times X \to (\mathbb{C}^{\frac{n(n+1)}{2}} )^K
$ 
\end{center}
such that
\begin{center}
 $f(p, x) = M^-(A_1(p,x), Z_1(p,x)) M^+(A_2(p,x), Z_2(p,x)) 
 \cdots M^{\pm}(A_K(p,x), Z_K(p,x)),$ \text{ where $p \in P, x \in X$.}    
\end{center}
\end{theo}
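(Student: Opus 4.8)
The plan is to prove Theorem \ref{main-theo-parametric} by reducing the factorization of a symplectic matrix to a lifting problem for the map $\Phi_K$ (row projection of the product of elementary matrices), then solving that lifting problem via the parametric Oka principle (Theorem \ref{POP}) applied to the stratified elliptic submersion of Proposition \ref{prop: Phi-K}, and finally running an induction on $n$. More precisely, given the null-homotopic $X$-holomorphic map $f\colon P\times X\to \mathrm{Sp}_{2n}(\mathbb{C})$, I first compose with $\pi_{2n}$ to get $\pi_{2n}\circ f\colon P\times X\to \mathbb{C}^{2n}\setminus\{0\}$ (it lands in $\mathbb{C}^{2n}\setminus\{0\}$ since symplectic matrices are invertible, so the last row never vanishes). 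The continuous Gromov--Vaserstein solution (Theorem \ref{Cont-GV}) supplies a continuous factorization $f = M^-(A_1,Z_1)\cdots M^{\pm}(A_K,Z_K)$ after enlarging $K$, and in particular a continuous lift of $\pi_{2n}\circ f$ through $\Phi_K$, which we must first arrange to avoid the singular set $S_K$; here one uses that $S_K$ has high codimension and a general position / homotopy argument (this is where Proposition \ref{prop: Phi-K}'s stratification and the dimension hypothesis $\dim(P\times X)=d$ enter, exactly as in \cite{KI,Scho}). Applying Theorem \ref{POP} to $\Phi_K\colon (\mathbb{C}^{\frac{n(n-1)}{2}}\times\mathbb{C}^{\frac{n(n+1)}{2}})^K\setminus S_K\to\mathbb{C}^{2n}\setminus\{0\}$ with parameter spaces $(P,P_0)$ (taking $P_0=\emptyset$ or the locus where holomorphy already holds) upgrades the continuous lift to an $X$-holomorphic lift $F=(A_j,Z_j)_{j=1}^K$.

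Next I would carry out the linear-algebra reduction that converts the lifted last row into a genuine factorization and lowers the matrix size. Write $\Psi_K(F)$ for the product of the elementary matrices determined by the holomorphic lift $F$; by construction its last row agrees with the last row of $f$, so $g := \Psi_K(F)^{-1} f$ is an $X$-holomorphic map into $\mathrm{Sp}_{2n}(\mathbb{C})$ whose last row is $e_{2n}$. A symplectic matrix with last row $e_{2n}$ has, by the symplectic relations (Proposition \ref{fact}-type identities), a block structure that is, up to a controlled bounded number of elementary $\tilde\Omega$-unitriangular factors, of the form $\mathrm{diag}(h, 1)$-like with $h$ symplectic of size $2(n-1)$; concretely one peels off elementary matrices (a fixed number, independent of $X$) to shrink $f$ to a symplectic matrix built from a $2(n-1)\times 2(n-1)$ symplectic block, and applies the inductive hypothesis to that block. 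The base case $n=1$ is $\mathrm{Sp}_2(\mathbb{C})=\mathrm{SL}_2(\mathbb{C})$, where the corresponding statement follows from the classical $\mathrm{SL}_2$ factorization / the Vaserstein--Ivarsson--Kutzschebauch result, or can be handled directly since all the $M^{\pm}(A,Z)$ with $A$ a $1\times 1$ unitriangular matrix (hence $A=1$) are exactly the standard unitriangular $\mathrm{SL}_2$ matrices. One must track that null-homotopy is preserved at each inductive step (it is, since multiplying by elementary matrices, which are individually null-homotopic, does not change the homotopy class, and the peeled-off block inherits a null-homotopy).

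The subtlety in the parametric setting is purely bookkeeping: the parameter space $P$ rides along unchanged through the row-projection, the Oka principle (which is already stated parametrically in Theorem \ref{POP} via the compact pair $P_0\subset P$), and the inductive descent; the covering-dimension hypothesis on $P\times X$ is what makes the continuous solution and the general-position argument against $S_K$ work, and it is stable under the reduction since the Stein factor only shrinks. The main obstacle — and the real content cited from elsewhere in the paper — is Proposition \ref{prop: Phi-K}, the stratified ellipticity of $\Phi_K$ off $S_K$; granting that, together with Theorem \ref{POP}, Theorem \ref{Cont-GV}, and the structural identities for symplectic matrices, the proof is the standard ``lift the last row, strip it off, induct'' scheme of \cite{KI,IKL,Scho}, and I expect no essentially new difficulty beyond verifying that the number of stripped-off factors at each stage is bounded uniformly in $X$ and $P$ (so that the total $K(n,d)$ is finite) and that the elementary matrices here, in the $(A,Z)$-coordinates, behave under block reduction exactly as the $M^{\pm}(G)$ did in the $\Omega$-case.
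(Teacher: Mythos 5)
Your overall architecture (project to the last row, lift continuously, apply the parametric Oka principle to $\Phi_K$ off its singular set, then strip off the last row and column and induct on $n$) is exactly the paper's, but the two steps where you substitute a sketch for an argument are precisely the places where the paper has to work, and as written both have gaps. First, your mechanism for getting the continuous lift off $S_K$ --- ``$S_K$ has high codimension and a general position / homotopy argument'' --- is not straightforward to carry out: the perturbed map must remain a \emph{lift} of $\pi_{2n}\circ f$ along $\Phi_K$, i.e.\ must stay inside the fibers of $\Phi_K$, so transversality in the ambient space $(\mathbb{C}^{n(n-1)/2}\times\mathbb{C}^{n(n+1)/2})^K$ does not directly apply; you would have to control the codimension of $S_K$ \emph{inside each fiber} and perturb fiberwise. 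The paper avoids this entirely (Proposition \ref{sup-main}): it inserts two extra factors $((I,I),(I,0))$ and replaces $F_1=(A_1,Z_1)$ by $(A_1,Z_1-I)$, so that the first factor of the new length-$(L+2)$ lift has symmetric part $Z=I$; since the description of $S_{L+2}$ in Proposition \ref{SingSet} requires $Z_1e_n=0$, the image lies off $S_{L+2}$ identically, with no dimension count at all.

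Second, your justification that the peeled-off $(2n-2)\times(2n-2)$ block $\tilde f_1$ is null-homotopic does not work as stated: knowing that $g=\Psi_K(F)^{-1}f$ is null-homotopic in $\mathrm{Sp}_{2n}(\mathbb{C})$ tells you nothing about the block, because extracting the block is only defined on the subset of symplectic matrices with last row $e_{2n}$, and a null-homotopy of $g$ in $\mathrm{Sp}_{2n}(\mathbb{C})$ will not stay in that subset. The paper gets the null-homotopy for free from the Oka principle: Theorem \ref{POP} produces a \emph{homotopy of lifts} $h_t$ with $h_0$ the continuous lift and $h_1$ holomorphic, so $\Psi_{L+2}(h_t)f^{-1}$ has the required block form for all $t$, $\tilde f_0=I_{2n-2}$, and $t\mapsto\tilde f_t$ is the desired null-homotopy of $\tilde f_1$ through $\mathrm{Sp}_{2n-2}(\mathbb{C})$. (A smaller imprecision: after multiplying by $i(\tilde f_1^{-1})$ the remainder is exactly one elementary matrix in the $\tilde\Omega$-sense, not ``a controlled bounded number''; this matters if you want to track $K(n,d)$.) With these two repairs your proof coincides with the paper's.
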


We abbreviate an element $((A_1,Z_1),(A_2,Z_2),\cdots,(A_K,Z_K)) \in (\mathbb{C}^{\frac{n(n-1)}{2}} \times \mathbb{C}^{\frac{n(n+1)}{2}})^K$ by $(\Vec{A}_K,\Vec{Z}_K )$. 
A lengthy but straightforward calculation yields the following.

\begin{prop} \label{SingSet}
    The singularity set $S_K$ of $\Phi_K: (\mathbb{C}^{\frac{n(n-1)}{2}} \times \mathbb{C}^{\frac{n(n+1)}{2}})^K  \to \mathbb{C}^{2n}\backslash \lbrace 0 \rbrace$ defined in \eqref{def-PhiK} is given by
    \begin{align*}
        S_K = \lbrace (\Vec{A}_K,\Vec{Z}_K)|Z_{i} e_n = 0, i<K;  A_j e_n = e_n, 1 < j < K \rbrace.
    \end{align*}
\end{prop}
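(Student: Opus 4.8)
The statement is purely computational: we must identify exactly the set of parameters $(\Vec A_K,\Vec Z_K)$ at which the differential of $\Phi_K = \pi_{2n}\circ\Psi_K$ fails to be surjective onto $T\mathbb{C}^{2n} = \mathbb{C}^{2n}$. First I would fix notation: write $\Psi_K = M_1 M_2\cdots M_K$ with $M_i = M^\mp(A_i,Z_i)$, and let $v_K(\Vec A_K,\Vec Z_K) = e_n^T\Psi_K$ denote the last row, so $\Phi_K = v_K^T$ (a column vector in $\mathbb{C}^{2n}\setminus\{0\}$). The key structural observation is the recursion $v_K = v_{K-1} M_K$, together with the fact that $v_{K-1}$ depends only on $(A_1,Z_1),\dots,(A_{K-1},Z_{K-1})$ and $M_K$ only on $(A_K,Z_K)$.

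**Main computation.** Surjectivity of $D\Phi_K$ is equivalent to: the partial derivatives of $v_K$ in all the coordinates $A_1,Z_1,\dots,A_K,Z_K$ span $\mathbb{C}^{2n}$. I would analyze this in two blocks. (1) Varying $(A_K,Z_K)$ alone: since $v_K = v_{K-1}M_K$ and $v_{K-1}\neq 0$, the image of these partials is $v_{K-1}\cdot(T_{M_K}\,\mathcal E)$ where $\mathcal E$ is the elementary-matrix group; because the elementary matrices act transitively enough, this already spans a large subspace unless $v_{K-1}$ is a very special covector — the computation will show the obstruction here forces $v_{K-1}$ to be (a multiple of) $e_n^T$, i.e.\ $Z_{K-1}e_n = 0$ and the accumulated upper-triangular part fixes $e_n$. (2) Varying the earlier blocks: $\partial v_K/\partial(\text{block }i) = (\partial v_i/\partial(\text{block }i))\,M_{i+1}\cdots M_K$, and since $M_{i+1}\cdots M_K$ is invertible, this contributes the image of $\partial v_i/\partial(\text{block }i)$ transported by an isomorphism. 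Tracking these images and using the explicit block forms of $M^-(A,Z)=\left[\begin{smallmatrix} A^{-T} & 0\\ Z & A\end{smallmatrix}\right]$ and $M^+(A,Z)=\left[\begin{smallmatrix} A^{-1} & C\\ 0 & A^T\end{smallmatrix}\right]$, I would compute that the total span fails to be everything precisely when, for every $i<K$, the vector reaching position $i$ is proportional to $e_n^T$ — which unwinds to the conditions $Z_i e_n = 0$ for $i<K$ and $A_j e_n = e_n$ for $1<j<K$. The asymmetry in the index ranges (all $Z_i$ with $i<K$, but only interior $A_j$) should come out of the boundary behavior: $A_1$ and $A_K$ are "free" because $v_0 = e_n^T$ is fixed at the start and $M_K$ multiplies on the very end, so perturbing $A_1$ or $A_K$ cannot help escape the singular locus once the $Z$-conditions hold, whereas perturbing an interior $A_j$ could — hence those must be constrained.

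**Expected main obstacle.** The genuinely delicate point is bookkeeping the linear algebra of which directions the partial derivatives span after transport by the products $M_{i+1}\cdots M_K$ — in particular showing that the conditions listed are not merely sufficient for $(\Vec A_K,\Vec Z_K)\in S_K$ but also necessary, i.e.\ that if any one of them fails then $D\Phi_K$ is onto. This requires checking that the "escape directions" produced by a single violated condition are not accidentally cancelled by the contributions of the other blocks. The cleanest way to organize this is probably to compute $v_K$ explicitly as a function of the last row entering each factor: set $w_i = v_{i-1}$ (so $w_1 = e_n^T$, $w_{i+1} = w_i M_i$), express $\Phi_K$ through the $w_i$, and observe that $D\Phi_K$ is onto iff at least one of the "local" derivative maps $w_i \mapsto \partial(w_i M_i)/\partial(A_i,Z_i)$ — transported forward — surjects, which reduces everything to the single-factor statement: for a fixed nonzero row $w$, the map $(A,Z)\mapsto w\,M^\pm(A,Z)$ has surjective differential unless $w \in \mathbb{C}e_n^T$ and (for interior indices) the subsequent $A$'s fix $e_n$. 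I would prove that single-factor lemma by direct inspection of the block structure, then assemble. I expect the writeup to be "lengthy but straightforward," exactly as the paper advertises, with the only real care needed in the treatment of the first and last factors.
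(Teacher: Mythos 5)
Your overall setup (the recursion $v_K=v_{K-1}M_K$ on the last row, per-block partial derivatives transported forward by the invertible products $M_{i+1}\cdots M_K$) is the same skeleton the paper uses, but the reduction you propose at the end contains a genuine error. You claim that $D\Phi_K$ is onto iff at least one of the transported local derivative maps $(A_i,Z_i)\mapsto w_iM_i$ surjects, and you propose a single-factor lemma asserting that for a fixed nonzero row $w$ the map $(A,Z)\mapsto wM^\pm(A,Z)$ has surjective differential unless $w$ is (essentially) $e_{2n}^T$. That lemma is false: the differential of a single factor is \emph{never} surjective onto $\mathbb{C}^{2n}$. For instance, for the factor $M^-(A,Z)$ acting on a fixed input with components $(u,v)$, the only parameters affecting the $A^Tv$-half are the entries $a_{ij}$ ($i<j$) of the unitriangular $A$, and the corresponding partials are $E_{ji}v=v_ie_j$ with $j\ge 2$; their span never contains $e_1$, so that half of the target is never filled by a single block (and the $Z$-directions do not touch it at all). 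Consequently the correct criterion is that the \emph{sum} of the per-block images equals $\mathbb{C}^{2n}$, and one cannot avoid analyzing how contributions of several factors combine. This is exactly where the real work lies: in the paper's computation the full rank on the regular locus is obtained only by assembling, say, a rank-$n$ block coming from the $Z_K$-derivatives in one half with a rank-$(n-1)$ block $E^*$ coming from an earlier $A$-block plus one extra column supplied by a nonzero entry $a_{hn,j}$ or a nonzero last column of some $Z_i$. Your "at least one block surjects" reduction would declare every point singular.

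A secondary imprecision: your explanation of why $A_1$ and $A_K$ are unconstrained is not right. $A_1$ drops out because its derivative contribution is identically zero at the relevant base point (the input to the first factor is $e_{2n}$ and $D_{A^T}(e_n)=0$ for unitriangular $A$), not because perturbing it "cannot help escape the singular locus"; and $Z_K$, $A_K$ are unconstrained because the $Z_K$-derivatives always contribute a full-rank $n$-dimensional piece to one half of the target, regardless of the other data. To repair the argument you would need to replace the single-factor lemma by an inductive rank computation of the concatenated Jacobian $J\Phi_K=\bigl[\,M_KJ\Phi_{K-1}\mid \mathcal{A}_K(\Phi_{K-1})\,\bigr]$, tracking exactly which coordinate directions each block supplies under the hypotheses $Z_ie_n=0$, $A_je_n=e_n$ and showing that violating any single condition injects the missing direction; this is the "lengthy but straightforward" calculation the paper alludes to.
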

In other words, in the singular set the last columns of all matrices $Z_i$, except $Z_K$, should be 0. Also, the last columns of all $A_i$, except $A_1$ and $A_K$, should be $e_n$.

The following result will be proven in Section \ref{sec-stra}:
\begin{prop} \label{reg-locus}
For $n \ge 2, K \ge 3$, the map $\Phi_K: (\mathbb{C}^{\frac{n(n-1)}{2}} \times \mathbb{C}^{\frac{n(n+1)}{2}})^K \setminus S_K \to \mathbb{C}^{2n}\backslash \lbrace 0 \rbrace$ is a stratified elliptic submersion.    
\end{prop}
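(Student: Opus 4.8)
The plan is to verify the two requirements of stratified ellipticity for the map $\Phi_K$ on the complement of the singular set $S_K$: first, that $\Phi_K$ is a submersion there (away from a lower-dimensional exceptional set), and second, that over each stratum the fibers carry a dominating spray, which by Remark \ref{VF-spray} can be produced from finitely many $\mathbb{C}$-complete vector fields tangent to the fibers and spanning the vertical tangent space at every point. The key structural input, which I would establish first, is the \textbf{reduction of the defining equations of the fibers}: although the fiber of $\Phi_K$ over a point $v \in \mathbb{C}^{2n}\setminus\{0\}$ is a priori cut out by the $2n$ scalar equations $\pi_{2n}\circ\Psi_K(\vec A_K,\vec Z_K) = v$, one can peel off the factors $M^{\pm}(A_K,Z_K)$, $M^{\pm}(A_{K-1},Z_{K-1})$, \dots\ one at a time — using that each elementary matrix acts invertibly and that the last row of a product depends on the last row of $\Psi_{K-1}$ in a controlled affine way — to rewrite the fiber as the zero set of a \emph{single} polynomial equation in the remaining free parameters. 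This is the analogue of \cite[Lemma 3.7]{KI}, and it is the place where our choice of coordinates $(A_i,Z_i)$, alternating the order of the diagonal and unipotent factors between odd and even positions, pays off.

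Once the fiber is described by one equation, I would carry out the following steps. (1) \emph{Submersivity.} Compute $D\Phi_K$ and check that, off $S_K$, its rank is $2n$; the description of $S_K$ in Proposition \ref{SingSet} (the last columns of the interior $Z_i$ and $A_j$ being forced) is exactly the locus where this rank drops, so on the complement $\Phi_K$ is a submersion. (2) \emph{Stratification.} Take the descending chain of complex subspaces given by the successive loci where various of the ``last column'' expressions $Z_i e_n$, $A_j e_n - e_n$ vanish; on each stratum the reduced single equation has constant shape, and the surjectivity of $D\Phi_K$ restricted to the stratum holds. (3) \emph{Complete vector fields.} On each stratum, exhibit finitely many $\mathbb{C}$-complete holomorphic vector fields on the total space that annihilate the single defining polynomial (hence are tangent to every fiber) and together span the vertical tangent space at each point — the standard candidates being shear-type fields $g(\text{other coords})\,\partial_{(A_i)_{k\ell}}$ and $g(\cdots)\,\partial_{(Z_i)_{k\ell}}$ whose coefficient functions are chosen so that the directional derivative of the reduced polynomial vanishes; completeness is automatic because the coefficient does not depend on the variable being differentiated. (4) Assemble these into a dominating fiber spray via composition of flows as in Remark \ref{VF-spray}, giving stratified ellipticity.

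The main obstacle I expect is Step (1) together with the bookkeeping behind the single-equation reduction: one must track how the last row of $\Psi_K$ propagates through the alternating product and verify that the Jacobian degenerates \emph{exactly} on $S_K$ and nowhere else, so that the open manifold $(\mathbb{C}^{\frac{n(n-1)}{2}}\times\mathbb{C}^{\frac{n(n+1)}{2}})^K\setminus S_K$ is the correct domain. A secondary difficulty is ensuring that the list of complete vector fields really spans the full $\bigl(\dim\text{domain} - \dim(\mathbb{C}^{2n}\setminus\{0\})\bigr)$-dimensional vertical space uniformly over each stratum, including the small strata where many coordinates are constrained; here one uses that the number of free parameters is large (we are free to take $K$ as big as the continuous solution of Theorem \ref{Cont-GV} demands) and that the reduced equation involves only a few of them, leaving ample room. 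Once the single-equation form is in hand, Steps (2)–(4) follow the template of \cite{KI} with only routine modifications, which is precisely the simplification over \cite{IKL} and \cite{Scho} advertised in the introduction.
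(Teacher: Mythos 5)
Your overall strategy matches the paper's: reduce the $2n$ fiber equations to a single polynomial equation that is at most linear in each variable, then use complete vector fields tangent to the level sets of that polynomial (the paper takes $V_{ij,p} = \frac{\partial p}{\partial x_i}\frac{\partial}{\partial x_j} - \frac{\partial p}{\partial x_j}\frac{\partial}{\partial x_i}$, complete and spanning by \cite[Lemmas 5.2, 5.3]{KI}), and assemble a dominating spray from their flows as in Remark \ref{VF-spray}. However, your Step (2) contains a genuine conceptual error: you propose to stratify by the vanishing loci of the expressions $Z_i e_n$ and $A_j e_n - e_n$, which are functions of the \emph{domain} coordinates. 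The definition of a stratified elliptic submersion requires a descending chain of closed complex subspaces of the \emph{target}, here $\mathbb{C}^{2n}\setminus\{0\}$, and the restricted submersion over each stratum of the target must admit a dominating spray. Moreover, the reason the target must be stratified is precisely what makes the single-equation reduction work: over the stratum $G_i = \{a_1 = \dots = a_{i-1} = 0,\ a_i \neq 0\}$ (and the analogous strata $N_i$ with $a=0$, $b_{n-i}\neq 0$) one divides by the distinguished nonvanishing coordinate $a_i$ (resp.\ $b_{n-i}$) of the target point to solve for variables such as $a_{ij,K}$, $z_{ij,K}$, and entries of $A_{K-1}$ in terms of the others. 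Your domain-side stratification does not provide this, and the reduced equation does not have ``constant shape'' on the loci you describe.

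A second, lesser point: the single-equation reduction is the actual substance of the proof (Lemmas \ref{red-1} and \ref{red-2} in the paper), and your proposal only gestures at it (``peel off the factors one at a time''). The reduction is not a formal consequence of invertibility of the elementary factors; it exploits the recursion $P^K = P^{K-1}M^{\pm}(A_K,Z_K)$ together with the specific triangular/symmetric structure — and, in the $N_i$ case, a further change of coordinates replacing $Z_{K-1}$ by $\tilde Z_{K-1} = A_{K-1}^{-1}Z_{K-1}A_{K-1}^{-T}$ and $A_{K-1}$ by the entries of $A_{K-1}^{-1}$ — to eliminate exactly the right variables. As written, your argument identifies the correct template from \cite{KI} but does not yet establish the result; correcting the stratification to one of $\mathbb{C}^{2n}\setminus\{0\}$ and carrying out the elimination over each stratum are both necessary.
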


\begin{prop}\label{sup-main} 
Given a null-homotopic, continuous $X$-holomorphic map $f : P \times X \to \mathrm{Sp}_{2n}(\mathbb{C})$, where $P$ is a compact Hausdorff space and $X$ a finite dimensional reduced Stein space. Assume that there exist a natural number $L$ and a continuous lift $F : P \times X \to (\mathbb{C}^{\frac{n(n-1)}{2}} \times \mathbb{C}^{\frac{n(n+1)}{2}})^L$ of $f$ along $\Psi_L$, i.e., $\Psi_L \circ F = f$.
Then there exist a continuous lift $\tilde F : P \times X \to (\mathbb{C}^{\frac{n(n-1)}{2}} \times \mathbb{C}^{\frac{n(n+1)}{2}})^{L+2} \setminus S_{L+2}$ of $f$ along $\Psi_{L+2}$, avoiding the singularity set and a continuous homotopy $h_t: P \times X \to (\mathbb{C}^{\frac{n(n-1)}{2}} \times \mathbb{C}^{\frac{n(n+1)}{2}})^{L+2}\setminus S_{L+2}$ of lifts of $\pi_{2n} \circ f$ along $\Phi_{L+2}$, such that $h_0 = \pi_{2n} \circ \tilde F $ and $h_1$ is $X$-holomorphic.
\end{prop}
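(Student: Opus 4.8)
The plan is to combine the continuous factorization hypothesis with the structure of the singularity set $S_{L+2}$ described in Proposition \ref{SingSet}, and then invoke the Oka principle (Theorem \ref{POP}) applied to the stratified elliptic submersion $\Phi_{L+2}$ from Proposition \ref{reg-locus}. First I would take the given continuous lift $F : P \times X \to (\mathbb{C}^{\frac{n(n-1)}{2}} \times \mathbb{C}^{\frac{n(n+1)}{2}})^L$ with $\Psi_L \circ F = f$ and append two extra trivial factors corresponding to the identity matrix, i.e.\ set $A_{L+1} = A_{L+2} = I_n$, $Z_{L+1} = Z_{L+2} = 0$; this produces a continuous lift along $\Psi_{L+2}$ whose product is still $f$. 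The issue is that this naive extension lands inside $S_{L+2}$ (the last columns of $Z_{L+1}$ and of the interior $A_j$ are exactly the forbidden values). So the next step is to perturb the lift, through a continuous homotopy, into the complement $(\mathbb{C}^{\frac{n(n-1)}{2}} \times \mathbb{C}^{\frac{n(n+1)}{2}})^{L+2} \setminus S_{L+2}$ while keeping the product equal to $f$: concretely, using the factorization identities of the elementary matrices (the analogues of Corollary~\ref{fact-imp}, or Proposition~\ref{fact-1} adapted to our $M^\pm$), one can insert a pair $M^+(\cdot)M^-(\cdot)$ (or conjugate a factor) that multiplies out to the identity but has the last column of its $Z$-part nonzero, thereby moving the $(L{+}1)$-st and interior data off $S_{L+2}$. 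This yields the continuous lift $\tilde F$ with $\Psi_{L+2}\circ \tilde F = f$ and $\tilde F(P\times X)\cap S_{L+2} = \emptyset$.

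Having obtained $\tilde F$, set $h_0 = \pi_{2n}\circ \tilde F$, a continuous lift of $\pi_{2n}\circ f$ along $\Phi_{L+2} : (\mathbb{C}^{\frac{n(n-1)}{2}} \times \mathbb{C}^{\frac{n(n+1)}{2}})^{L+2}\setminus S_{L+2} \to \mathbb{C}^{2n}\setminus\{0\}$. Now I would apply the parametric Oka property of Theorem \ref{POP} to the stratified elliptic submersion $\Phi_{L+2}$ (valid since $L+2\ge 3$ and $n\ge 2$, by Proposition \ref{reg-locus}): with parameter space $P$ (and empty subparameter space $P_0=\emptyset$, or $P_0$ a point where things are already holomorphic), the continuous $X$-holomorphic map $\pi_{2n}\circ f : P\times X \to \mathbb{C}^{2n}\setminus\{0\}$ together with the continuous lift $h_0$ gives, by Theorem \ref{POP}, a homotopy $h_t : P\times X \to (\mathbb{C}^{\frac{n(n-1)}{2}} \times \mathbb{C}^{\frac{n(n+1)}{2}})^{L+2}\setminus S_{L+2}$ of lifts of $\pi_{2n}\circ f$ along $\Phi_{L+2}$, with $h_1$ being $X$-holomorphic. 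This is precisely the asserted homotopy, and together with $\tilde F$ it gives the full conclusion.

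The main obstacle is the second step: engineering the explicit continuous homotopy that pushes the extended lift off the singularity set $S_{L+2}$ while preserving the product $f$. One has to be careful that the perturbation is a genuine homotopy through lifts along $\Psi_{L+2}$ (so the product stays exactly $f$, not just up to homotopy), that it is continuous in the parameter $p\in P$, and that at every time $t$ the data avoids $S_{L+2}$ — which, per Proposition \ref{SingSet}, requires controlling the last columns of all the $Z_i$ ($i<L+2$) and of the interior $A_j$ ($1<j<L+2$) simultaneously. The cleanest way is probably to absorb an invertible diagonal or elementary correction into neighbouring factors using the multiple factorizations of $M^\pm$ recorded earlier, so that a single localized modification suffices; verifying that this keeps us in the complement of $S_{L+2}$ for all $t$ is the technical heart of the argument. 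Everything else — appending trivial factors, and the final invocation of the Oka principle — is formal once Propositions \ref{SingSet} and \ref{reg-locus} are in hand.
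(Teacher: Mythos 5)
Your overall architecture matches the paper's: extend the given lift by two factors, arrange that the extended lift misses $S_{L+2}$, and then invoke Proposition \ref{reg-locus} together with Theorem \ref{POP} to produce the homotopy $h_t$. The final Oka step in your write-up is fine. The gap is exactly where you locate it yourself: the construction of $\tilde F$ off the singularity set is left as a sketch, and the first mechanism you propose does not work. A pair of adjacent elementary matrices $M^+(A,Z)M^-(B,W)$ (or $M^-\,M^+$) that multiplies out to the identity is necessarily the trivial pair: the diagonal blocks force $B=A^{-T}$ with both $A,B$ upper unitriangular, hence $A=B=I$, and then the off-diagonal blocks force $Z=W=0$. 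So you cannot ``insert a pair that multiplies to the identity but has nonzero $Z$-part.'' Your alternative --- absorbing a correction into a neighbouring factor --- is the viable route, and it is what the paper does, very concretely: prepend two factors and modify the first original one via
\[
M^-(A_1,Z_1)
=\left[\begin{array}{cc} I & 0 \\ I & I\end{array}\right]
\left[\begin{array}{cc} I & 0 \\ 0 & I\end{array}\right]
\left[\begin{array}{cc} I & 0 \\ Z_1-I & I\end{array}\right]
\left[\begin{array}{cc} A_1^{-T} & 0 \\ 0 & A_1\end{array}\right]
= M^-(I,I)\,M^+(I,0)\,M^-(A_1,Z_1-I),
\]
so that $\tilde F=((I,I),(I,0),(A_1,Z_1-I),F_2,\dots,F_L)$ satisfies $\Psi_{L+2}\circ\tilde F=f$.

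You also overestimate what it takes to avoid $S_{L+2}$. By Proposition \ref{SingSet} the singular set is where \emph{all} the degeneracy conditions hold simultaneously ($Z_ie_n=0$ for every $i<K$ and $A_je_n=e_n$ for every $1<j<K$); hence to lie in the complement it suffices that a \emph{single} condition fails. In the paper's $\tilde F$ the first $Z$-component is the identity matrix, whose last column is $e_n\neq 0$, so the entire image of $\tilde F$ avoids $S_{L+2}$ for free --- no perturbation homotopy is needed at all, and there is no issue of ``controlling all columns simultaneously for all $t$.'' With this explicit $\tilde F$ in hand, your concluding application of the parametric Oka property goes through as you describe.
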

\begin{proof}
By assumption we have
\begin{center}
    $F = (F_1, \cdots, F_L) : P \times X \to (\mathbb{C}^{\frac{n(n-1)}{2}} \times \mathbb{C}^{\frac{n(n+1)}{2}} )^L$ with $f = \Psi_L \circ F, \, F_k =  (A_k,Z_k)$.   
\end{center}
We define the map
$\tilde F : P \times X \to (\mathbb{C}^{\frac{n(n-1)}{2}} \times \mathbb{C}^{\frac{n(n+1)}{2}} )^{L+2}$
by
 \begin{center}
  $\tilde F = ((I,I), (I,0), F^{\prime}_1, F_2, \cdots, F_L)$,  where $F_1'=(A_1, Z_1-I)$.
 \end{center}
Since
\begin{center}
    $ \left[\begin{array}{ccc} I & 0 \\ Z_1 & I\end{array}\right]\left[\begin{array}{ccc} A_1^{-T} & 0 \\ 0 & A_1\end{array}\right] = \left[\begin{array}{ccc} I & 0 \\ I & I\end{array}\right]\left[\begin{array}{ccc} I & 0 \\ 0 & I\end{array}\right] \left[\begin{array}{ccc} I & 0 \\ Z_1-I & I\end{array}\right]\left[\begin{array}{ccc} A_1^{-T} & 0 \\ 0 & A_1 \end{array}\right] $
\end{center}
we have $f = \Psi_{L+2} \circ \tilde F$. By Proposition \ref{SingSet}, $\tilde{F}$ has image outside the singularity set $S_{L+2}$. Then Proposition \ref{reg-locus} and Theorem \ref{POP} give a continuous homotopy $h_t: P \times X \to (\mathbb{C}^{\frac{n(n-1)}{2}} \times \mathbb{C}^{\frac{n(n+1)}{2}})^{L+2}\setminus S_{L+2}$ of liftings of $\pi_{2n} \circ f$ along $\Phi_{L+2}$, such that $h_0 = \pi_{2n} \circ \tilde F $ and $h_1$ is $X$-holomorphic.
\end{proof}

\begin{proof}[Proof of Theorem \ref{main-theo-parametric}]
The proof is by induction on $n$. The induction base follows from the factorization for $\mathrm{Sp}_2(\mathbb{C}) = \mathrm{SL}_2(\mathbb{C})$ in \cite{KI}.

 Assume that the theorem is true for $n-1$, we proceed to prove it for $n$. By the solution to the continuous Vaserstein problem, there exist a number $L$ and a continuous lift $F : P \times X \to (\mathbb{C}^{\frac{n(n-1)}{2}} \times \mathbb{C}^{\frac{n(n+1)}{2}})^L$ of $f$ along $\Psi_L$, i.e., $\Psi_L \circ F = f$. Then Proposition \ref{sup-main} yields a continuous homotopy $h_t: P \times X \to (\mathbb{C}^{\frac{n(n-1)}{2}} \times \mathbb{C}^{\frac{n(n+1)}{2}})^{L+2}\setminus S_{L+2}$ of lifts of $\pi_{2n} \circ f$ along $\Phi_{L+2}$, such that $h_0 = \pi_{2n} \circ \tilde F $ and $h_1$ is $X$-holomorphic, where $\tilde{F}$ is as in the proposition. 
 
Since the two symplectic matrices $f(p,x)$ and $\Psi_{L+2}(h_t(p,x))$ have the same last row, we get
\begin{center}
     $\Psi_{L+2}(h_t(p,x))f(p,x)^{-1} = \left[\begin{array}{cccc} A_{t}(p, x)  & 0 & B_{t}(p, x) & b_{1,t}(p,x) \\  a_{t}(p,x) & 1 & b_{2,t}(p,x) & b_{3,t}(p,x) \\
    C_t(p,x) & 0 & D_t(p,x) & d_t(p,x) \\
    0 & 0 & 0 & 1\end{array}\right]$.
\end{center}
where $A_t(p, x), B_t(p, x), C_t(p, x)$ and $D_t(p, x)$ are $(n - 1) \times (n - 1)$ matrices, and the remaining maps are vectors of appropriate dimension. Consider the matrix
\begin{center}
 $\tilde{f}_t(p,x) = \left[\begin{array}{ccc} A_t(p, x) & B_t(p, x) \\  C_t(p, x) & D_t(p, x)\end{array}\right]   \in \mathrm{Sp}_{2n-2}(\mathbb{C})$
\end{center}
At $t=0$ we have from the continuous solution $\Psi_{L+2}(h_0(p, x)) = f(p, x)$, which implies $\tilde{f}_0 = I_{2n-2}$ and thus the holomorphic map
\begin{center}
 $\tilde{f}_1 : P \times X \to \mathrm{Sp}_{2n-2}(\mathbb{C})$  
\end{center}
is null-homotopic. Let $i$ be the canonical inclusion of $ \mathrm{Sp}_{2n-2}(\mathbb{C})$ into $ \mathrm{Sp}_{2n}(\mathbb{C})$ given by
\begin{center}
    $i(\tilde{f}_1(p,x)) = i \left( \left[\begin{array}{ccc} A_t(p, x) & B_t(p, x) \\  C_t(p, x) & D_t(p, x)\end{array}\right] \right) =\left[\begin{array}{cccc} A_{t}(p, x)  & 0 & B_{t}(p, x) & 0 \\  0 & 1 & 0 & 0 \\
    C_t(p,x) & 0 & D_t(p,x) & 0 \\
    0 & 0 & 0 & 1\end{array}\right] $
\end{center}
By the induction hypothesis, we have
\begin{center}
    $i(\tilde{f}_1(p,x)^{-1})=\left[\begin{array}{cccc} \tilde{A}(p, x)  & 0 & \tilde{B}(p, x) & 0 \\  0 & 1 & 0 & 0 \\
    \tilde{C}(p,x) & 0 & \tilde{D}(p,x) & 0 \\
    0 & 0 & 0 & 1\end{array}\right] $
\end{center}
is a finite product of holomorphic elementary symplectic matrices. \\
\\
Then the matrix $\Psi_{L+2}(h_1(p, x))f(p, x)^{-1}i(\tilde{f}_1(p, x)^{-1})$ 
\begin{center}
   $  \left[\begin{array}{cccc} I_{n-1}  & 0 & 0 & b_{1,1}(p,x) \\  a_1(p,x)\tilde{A}(p,x)+b_{2,1}(p,x)\tilde{C}(p,x) & 1 & a_1(p,x)\tilde{B}(p,x)+b_{2,1}(p,x)\tilde{D}(p,x) & b_{3,1}(p,x) \\
   0 & 0 & I_{n-1} & d_1(p,x) \\
    0 & 0 & 0 & 1\end{array}\right]$
\end{center}
is an elementary symplectic matrix, where
\begin{center}
    $a_1(p,x)\tilde{A}(p,x)+b_{2,1}(p,x)\tilde{C}(p,x) = -d_1(p,x)^T$
\end{center}
and
\begin{center}
    $a_1(p,x)\tilde{B}(p,x)+b_{2,1}(p,x)\tilde{D}(p,x) = b_{1,1}(p,x)^T$.
\end{center}
This implies $f(p, x)$ has a finite holomorphic symplectic factorization as required. 
\end{proof}
Examining the proof we observe the following.
\begin{rema}
  If $K_{cont}(n,d)$ is such that any holomorphic 
  symplectic matrix of size $2n$ can be factorized by $K_{cont}(n,d)$ continuous elementary symplectic matrices, then we have
  \begin{align}
      K(n,d) \le K_{cont}(n,d) + K(n-1,d) + 3,
    \end{align}   
   Since by Tavgen reduction \cite{HKS}
   \begin{align}
   \quad K_{cont} (n,d) \le K_{cont}(2,d). 
\end{align}
  we deduce $K(n,d) \le (n-1) (K_{cont}(2,d) + 3)$.
\end{rema}

\section{Stratified ellipticity of $\Phi_K$}\label{sec-stra}

Recall that $S_K$ denotes the singularity set of the map $\Phi_K$ described in Proposition \ref{SingSet}. From \cite[Theorem 3.5]{Scho} we have the following. 
\begin{lemm}
    For $n \ge 2, K \ge 3$, the submersion $\Phi_K : (\mathbb{C}^{\frac{n(n-1)}{2}} \times \mathbb{C}^{\frac{n(n+1)}{2}})^K \setminus S_K \to \mathbb{C}^{2n}\setminus \{0\}$ is surjective. 
\end{lemm}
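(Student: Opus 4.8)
This is \cite[Theorem~3.5]{Scho}; here is the strategy I would follow for a self-contained argument. Fix a target $w=(w',w'')\in\mathbb{C}^n\times\mathbb{C}^n$ with $w\neq 0$. The plan is to produce a preimage of $w$ under $\Phi_K$ in which \emph{every} matrix $A_i$ equals $I_n$ and only the first three matrices $Z_i$ are possibly nontrivial, and to arrange $Z_1e_n\neq 0$; by Proposition~\ref{SingSet} such a point automatically lies outside $S_K$, since $1<K$.

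The first reduction I would make is to the case $K=3$. Because $M^-(I_n,0)=M^+(I_n,0)=I_{2n}$, appending $K-3$ trivial factors $(I_n,0)$ at the end of a solution of $\Phi_3(\vec A_3,\vec Z_3)=w$ produces a solution of $\Phi_K=w$ with the same first three coordinates; if that solution has $Z_1e_n\neq0$ it stays outside $S_K$. So it suffices to hit an arbitrary $w\neq 0$ with $K=3$ while avoiding $S_3$.

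For $K=3$ I would take $A_1=A_2=A_3=I_n$. A direct computation with the block forms of $M^\pm$ (using that $Z_i$ is symmetric, so $e_n^{T}Z_i=(Z_ie_n)^{T}$) gives
\[
\pi_{2n}\!\left(M^-(I_n,Z_1)\,M^+(I_n,Z_2)\,M^-(I_n,Z_3)\right)=\left(a+Z_3(Z_2a+e_n),\;\,Z_2a+e_n\right),\qquad a:=Z_1e_n,
\]
so $\Phi_3=w$ reduces to the two linear conditions $Z_2a=w''-e_n$ and $Z_3w''=w'-a$ on the symmetric matrices $Z_2,Z_3$, with $a=Z_1e_n$ still at our disposal. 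Now I would choose $a$ as follows: if $w''=0$, put $a:=w'$ (nonzero, since $w=(w',0)\neq0$) and take $Z_3=0$; if $w''\neq0$, put $a:=e_1$ and solve $Z_3w''=w'-e_1$ for a symmetric $Z_3$, which is possible because $Z\mapsto Zw''$ maps the symmetric $n\times n$ matrices onto $\mathbb{C}^n$ when $w''\neq0$. In both cases $a\neq0$, so $Z_2a=w''-e_n$ likewise has a symmetric solution, and we pick any symmetric $Z_1$ with $n$-th column equal to $a$. With $A_1=A_2=A_3=I_n$ this yields $\Phi_3(\vec A_3,\vec Z_3)=w$, and $Z_1e_n=a\neq0$ puts the triple outside $S_3$ by Proposition~\ref{SingSet}.

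The main obstacle here is nothing deep: it is the bookkeeping in the block computation of $\pi_{2n}$ of the triple product, and the (tiny) case split on whether $w''$ vanishes that is forced by the solvability of $Z_3w''=w'-a$. The one genuinely used fact is that for $v\neq0$ the map $Z\mapsto Zv$ on symmetric $n\times n$ matrices is surjective onto $\mathbb{C}^n$, which one checks by testing against the elementary symmetric matrices $e_ie_j^{T}+e_je_i^{T}$.
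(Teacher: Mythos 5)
Your argument is correct. Note that the paper itself gives no proof of this lemma at all -- it simply imports the statement from \cite[Theorem 3.5]{Scho} -- so your self-contained construction is a genuine addition rather than a reproduction. I checked the key computation: with $A_1=A_2=A_3=I_n$ one has
\[
e_{2n}^{T}\,M^-(I,Z_1)\,M^+(I,Z_2)\,M^-(I,Z_3)
=\bigl(\,a^{T}+(a^{T}Z_2+e_n^{T})Z_3,\ a^{T}Z_2+e_n^{T}\,\bigr),\qquad a=Z_1e_n,
\]
which (using symmetry of the $Z_i$) is exactly your formula, and the two resulting linear conditions $Z_2a=w''-e_n$, $Z_3w''=w'-a$ are solvable in symmetric matrices by the surjectivity of $Z\mapsto Zv$ for $v\neq 0$, which you correctly verify on the basis $E_{ii}$, $E_{ij}+E_{ji}$. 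The case split on $w''=0$ versus $w''\neq 0$ is handled properly, $a\neq 0$ in both cases forces the point outside $S_3$ by Proposition \ref{SingSet} (the singular set requires \emph{all} $Z_i$, $i<K$, to have vanishing last column), and padding with factors $(I_n,0)$, which are the identity in $\mathrm{Sp}_{2n}(\mathbb{C})$, extends the solution to any $K\ge 3$ without entering $S_K$. The only thing your write-up buys beyond the paper is transparency: the paper's route forces the reader into Schott's considerably more technical Jacobian analysis, whereas your three-factor normal form with $A_i=I_n$ reduces surjectivity to elementary linear algebra.
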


We start the proof of Proposition \ref{reg-locus}, namely the stratified ellipticity of the submersion $\Phi_K$ outside the singularity set $S_K$. The proof uses the method in \cite{KI}. Over each stratum of $\mathbb{C}^{2n}\setminus \{0\}$ in an appropriate stratification, we are able to reduce the $2n$ defining polynomial equations of the fibers of $\Phi_K$ to one single polynomial equation $p(x_1, \cdots , x_m) = a$. It turns out that the polynomial $p$ will be no more than linear in each of the variables $x_i, i = 1, \dots, m$. Therefore, the vector fields 
\begin{align*}
    V_{ij,p} = \frac{\partial p}{\partial x_i}\frac{\partial}{\partial x_j} - \frac{\partial p}{\partial x_j}\frac{\partial}{\partial x_i}, \quad  1 \leq i < j \leq m
\end{align*}
will be globally integrable \cite[Lemma 5.2]{KI} and span the tangent space of the fiber $\{ p = a \}$ at all smooth points \cite[Lemma 5.3]{KI}. The dominating fiber spray is then constructed as in Remark \ref{VF-spray}.

Recall that
\begin{center}
       For $k$ even, $M^+(A_k,Z_k) =\left[\begin{array}{ccc} I & Z_k \\ 0 & I\end{array}\right] \left[\begin{array}{ccc} A^{-1}_k & 0 \\ 0 & A^T_k\end{array}\right]  $, 
    \end{center}
    and
    \begin{center}
    for $k$ odd, $M^-(A_k,Z_k)=\left[\begin{array}{ccc} I & 0 \\ Z_k & I \end{array}\right] \left[\begin{array}{ccc} A_k^{-T} & 0 \\ 0 & A_k \end{array}\right] $.
\end{center}
 For $K$ even, we have:
 \begin{center}
     $\Phi_K((A_1,Z_1),\cdots (A_K,Z_K)) = P^K =  e^T_{2n} M^-(A_1,Z_1) M^+(A_2,Z_2) \cdots M^-(A_{K-1},Z_{K-1}) M^+(A_K,Z_K)$.
 \end{center}
 Hence we have a recursive formula:
 \begin{center}
     $P^K = P^{K-1} M^+(A_K,Z_K)$
 \end{center}
Denote $P^K_f=(P^K_1,\cdots,P^K_n)$, $P^K_s=(P^K_{n+1},\cdots,P^K_{2n})$. Then the above equation can be writen as
\begin{center}
    $P^K \left[\begin{array}{ccc} A_K & 0 \\ 0 & A^{-T}_K\end{array}\right] = P^{K-1} \left[\begin{array}{ccc} I_n & Z_K \\ 0 & I_n\end{array}\right]$
\end{center}
and this is equivalent to
\begin{equation}\label{equa-2}
    \begin{cases}
    P^K_f A_K = P^{K-1}_{f} \\
      P^K_s A^{-T}_K = P^{K-1}_s + P^{K-1}_f Z_K 
\end{cases}
\end{equation}
similarly we have
\begin{center}
     $P^{K-1} = P^{K-2} M^-(A_{K-1},Z_{K-1})$
\end{center}
which is equivalent to
\begin{center}
    $P^{K-1} \left[\begin{array}{ccc} A^T_{K-1} & 0 \\ 0 & A^{-1}_{K-1}\end{array}\right] = P^{K-2} \left[\begin{array}{ccc} I_n & 0 \\ Z_{K-1} & I_n\end{array}\right]$
\end{center}
and
\begin{equation}\label{equa-3}
    \begin{cases}
    P^{K-1}_f A^T_{K-1} = P^{K-2}_f+P^{K-2}_s Z_{K-1}  \\
      P^{K-1}_s A^{-1}_{K-1} = P^{K-2}_{s}
\end{cases}
\end{equation}

We define the following stratification of $\mathbb{C}^{2n} \setminus \{0 \}$
\begin{align*}
    G_i &=  \lbrace (a,b) \in \mathbb{C}^n \times \mathbb{C}^n : a_1 =  \cdots = a_{i-1} =0, a_i \neq 0 \rbrace , \quad i = 1, \dots, n \\
    N_i &= \lbrace (0,b) \in \mathbb{C}^n\times \mathbb{C}^n: b_n = \cdots = b_{n-i+1} =  0, b_{n-i} \neq 0  \rbrace, \quad i = 0, \dots, n-1
\end{align*}

\begin{lemm}\label{red-1}
Let $(a,b)\in G_i \subset \mathbb{C}^{2n}\setminus \{0\}, i = 1, \dots, n$. Then the fiber $(P^K)^{-1}(a,b)$ given by the set of $2n$ equations $P^K(A,Z) = (a, b)$ can be described by one equation, namely $P^K_i = a_i$.
\end{lemm}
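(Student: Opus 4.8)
The plan is to recover the full fiber from the single equation $P^K_i = a_i$ by exhibiting an explicit algebraic parametrization: given any $(\vec A_K, \vec Z_K)$ with last projection $(a,b)$ and the coordinate normalization $a_1 = \cdots = a_{i-1} = 0$, $a_i \neq 0$, I would show that the remaining $2n-1$ components $P^K_1,\dots, P^K_{i-1}, P^K_{i+1}, \dots, P^K_{2n}$ are determined as rational (in fact polynomial after clearing the nowhere-vanishing denominator $a_i$) functions of the free parameters, once $P^K_i = a_i$ is imposed. First I would peel off the last factor using the recursion \eqref{equa-2}: the equation $P^K_f A_K = P^{K-1}_f$ together with $A_K$ being upper unitriangular means $P^{K-1}_f$ is triangular-linear in the entries of $A_K$ with $P^{K-1}_{f,1} = P^K_{f,1}$, and similarly $P^{K-1}_s$ is read off from $P^K_s A_K^{-T} = P^{K-1}_s + P^{K-1}_f Z_K$. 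So moving from step $K$ to step $K-1$ the first coordinate of the ``$f$-part'' is preserved, and one extra equation of the system becomes an identity that expresses one new free variable.

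The key step is an induction on $K$ (or a downward induction peeling factors one at a time, alternating \eqref{equa-2} and \eqref{equa-3}) tracking the invariant that on $G_i$ the coordinate $P^t_i$ (the $i$-th entry of the current $f$-part, which sits in the block where $a$ lives) stays equal to $a_i \ne 0$ through all the factors whose removal does not yet touch the $N$-strata, and that every other coordinate equation can be solved for one of the symmetric-matrix entries of some $Z_j$ or one of the unitriangular entries of some $A_j$. Concretely: because $a_i \ne 0$, in the relation $P^K_f A_K = P^{K-1}_f$ the components with index $> i$ can be solved for the superdiagonal entries $a_{i\ell,K}$ of $A_K$ for $\ell > i$ (each such entry appears linearly, multiplied by $P^K_i = a_i$, plus a polynomial in already-solved data); likewise the relation $P^K_s A_K^{-T} = P^{K-1}_s + P^{K-1}_f Z_K$ lets one solve for the entries of the symmetric matrix $Z_K$ in terms of $P^{K-1}_s$, again with the coefficient $a_i \ne 0$ available in the appropriate rows. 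Iterating, the whole tuple $(\vec A_K, \vec Z_K)$ is parametrized by $a_i$, by $m$ free variables, and by the single constraint $P^K_i = a_i$; conversely any choice of the free variables subject to that one equation yields a point of the fiber. Hence $(P^K)^{-1}(a,b) = \{ P^K_i = a_i \}$ as sets (indeed as schemes).

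The main obstacle I anticipate is bookkeeping the order of elimination so that at each stage the variable being solved for genuinely appears with an invertible coefficient — this is exactly where the hypothesis $a_i \ne 0$ (and the normalization $a_1 = \cdots = a_{i-1} = 0$, which kills the lower-index equations that would otherwise be obstructions) is used, and it is the analogue of the reduction in \cite[Section 5]{KI}. A secondary subtlety is the interface with the strata $N_j$: on $G_i$ all of the relevant action happens in the ``$a$-block'', so the $b$-components of $P^K$ are obtained for free once the $a$-block is handled, but one must check that the recursions \eqref{equa-2}, \eqref{equa-3} do not reintroduce an equation in the $b$-block that fails to be solvable — it does not, precisely because $P^{K}_s$ (resp. $P^{K-1}_s$) enters those relations already multiplied by invertible unitriangular matrices. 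Granting the careful elimination order, the conclusion that the $2n$ equations collapse to the single equation $P^K_i = a_i$ follows, and moreover (as promised in the paragraph preceding the lemma) this single $p$ is at most linear in each remaining variable, which is what the vector fields $V_{ij,p}$ will exploit.
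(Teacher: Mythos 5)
Your overall strategy is the same as the paper's: use the recursions \eqref{equa-2} and \eqref{equa-3} to solve $2n-1$ of the $2n$ fiber equations for suitable entries of the $A_j$'s and $Z_j$'s, each time with the invertible coefficient $a_i$ in front of the variable being eliminated, so that the fiber becomes a graph over the single hypersurface $\{P^K_i=a_i\}$. Your handling of the last block $(A_K,Z_K)$ is exactly the paper's Steps 1 and 2: the $n-i$ equations with index $j>i$ coming from $P^K_f A_K = P^{K-1}_f$ are solved for $a_{ij,K}$, $j=i+1,\dots,n$, and the $n$ equations $P^K_s A^{-T}_K = P^{K-1}_s + P^{K-1}_f Z_K$ are solved for the $i$-th row (equivalently, by symmetry, column) of $Z_K$, using $P^{K-1}_i = P^K_i = a_i \neq 0$. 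This disposes of $2n-i$ of the $2n$ equations; in particular the $b$-block is not ``obtained for free'' but is precisely what the $Z_K$-elimination handles.

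The genuine gap is your assertion that the normalization $a_1=\cdots=a_{i-1}=0$ ``kills the lower-index equations.'' It does not. For $j<i$ the conditions $P^K_j=0$ are $i-1$ nontrivial constraints, equivalent (since $P^{K-1}_j = P^K_j + \sum_{k<j}a_{kj,K}P^K_k$ and $P^K_k=a_k=0$ for $k<j$) to $P^{K-1}_j=0$; and $P^{K-1}_j$ does not depend on $(A_K,Z_K)$ at all, so these equations cannot be absorbed by the last block. Your elimination therefore leaves $i$ equations rather than one whenever $i\ge 2$. The missing step is the paper's Step 3: descend one more level and use the first half of \eqref{equa-3}, namely $P^{K-1}_f A^T_{K-1}=P^{K-2}_f+P^{K-2}_s Z_{K-1}=:\beta$, in which the variable $a_{ji,K-1}$ appears with coefficient $P^{K-1}_i=a_i\neq 0$, to solve the remaining $i-1$ equations for $a_{ji,K-1}$, $j=1,\dots,i-1$. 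With this added, the count $(n-i)+n+(i-1)=2n-1$ closes and only $P^K_i=a_i$ remains. (A minor further caveat: your ``invariant'' that $P^t_i$ stays equal to $a_i$ for all $t$ is neither needed nor true in general; it is only used, and only verified, for $t=K$ and $t=K-1$.)
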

\begin{proof}
    Step 1: Observe that for $j>i$
    \begin{align*}
        P^{K-1}_j = P^K_f A_K e_j = P^K_j+\sum_{k=1}^{j-1} a_{kj,K}P_k^K 
        = P^K_j+a_{ij,K}P_i^K+\sum_{k=i+1}^{j-1} a_{kj,K}P_k^K. 
    \end{align*}
Over $G_i$ we have $P^{K}_i = a_i \neq 0$, thus we can use these equations to express $a_{ij,K}$
\begin{align*}
    P^{K-1}_j &= P^K_j + a_{ij,K} a_i + \sum_{k=i+1}^{j-1} a_{kj,K}P_k^K, \\
    \implies a_{ij,K} &=(P^{K-1}_j-P^K_j-\sum_{k=i+1}^{j-1} a_{kj,K}P_k^K)/a_i \quad \text{for } j = i+1, \dots, n.
\end{align*}
Step 2: We now use the second set of equations in (\ref{equa-2}) to express $n$ variables in $Z_K$. Over $G_i$, we have $P^{K-1}_i = P^K_i = a_i \neq 0$. Let $\alpha = P^K_s A^{-T}_K-P^{K-1}_s$.
By (\ref{equa-2}) we have $\alpha = P^{K-1}_f Z_K$, that is
\begin{center}
    $\alpha_j = \alpha e_j = P^{K-1}_f Z_K e_j = \sum^n_{l=1} P^{K-1}_l z_{lj,K} $ for $j = 1, \cdots,n$.
\end{center}
When $j \neq i$
\begin{center}
    $z_{ij,K}= (\alpha_j-\sum^n_{l=1,l\neq i}z_{lj,K}P^{K-1}_l)/a_i $
\end{center}
and
\begin{center}
   $z_{ii,K} = (\alpha_i-\sum^n_{l=1,l\neq i}z_{il,K}P^{K-1}_l)/a_i$ (by symmetry of $Z_K$)
\end{center}
and then we have
\begin{center}
    $z_{ii,K}= \frac{1}{a_i}(\alpha_i-\sum^n_{l=1,l\neq i}P^{K-1}_l(\frac{1}{a_i}(\alpha_i-\sum^n_{k=1,k\neq i}z_{ki,K}P^{K-1}_k)))$.
\end{center}
It remains to consider the following system of equations
\begin{equation}
    \begin{cases}
    P^K_1 = P^{K-1}_1=0 \\
    \quad \quad \, \vdots \\
      P^K_{i-1}= P^{K-1}_{i-1} =  0\\
      P^K_i = P^{K-1}_{i} = a_i \neq 0
\end{cases}
\end{equation}
Step 3: Now we use the first set of equation in (\ref{equa-3}) to express $(i-1)$ variables of $A_{K-1}$ as follows:
\begin{center}
    $P^{K-1}_f A^T_{K-1} = P^{K-2}_f+P^{K-2}_s Z_{K-1} =:  \beta$
\end{center}
and
\begin{center}
    $\beta_j =  P^{K-1}_f A^T_{K-1} e_j = P^{K-1}_j + \sum^n_{k=j+1} P^{K-1}_k a_{jk,K-1}$.
\end{center}
For $j =  1,2,\cdots,i-1$
\begin{align*}
    \beta_j &= P^{K-1}_j + a_i a_{ji, K-1} + \sum^n_{k=j+1, k \neq i} P^{K-1}_k a_{jk,K-1} \\
    \implies a_{ji,K-1}  &= (\beta_j-P^{K-1}_j-\sum^n_{k=j+1,k \neq i}P^{K-1}_ka_{jk,K-1})/a_i.
\end{align*}
\end{proof}

\begin{lemm}\label{red-2}
     Let $(0,b)\in N_i \subset \mathbb{C}^{2n}\setminus \{0\}, i = 0, \dots, n-1$. Then the fiber $(\Phi_K|_{U_K})^{-1}(0,b)$ given by the set of $2n$ equations $P^K(A,Z)=(0,b)$ can be described by one equation, namely $P^K_{2n-i}=b_{n-i}$.
\end{lemm}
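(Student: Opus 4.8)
The plan is to mirror the proof of Lemma~\ref{red-1} (which follows the method of \cite{KI}): over the stratum $N_i$ we use $2n-1$ of the $2n$ equations in the system $P^K(A,Z)=(0,b)$ to express $2n-1$ of the coordinate variables as rational functions of the remaining ones, with every denominator a power of the nonvanishing constant $b_{n-i}$, so that the fiber becomes the graph of these functions cut out by a single equation. That equation will be the pivot $P^K_{2n-i}=b_{n-i}$, i.e.\ $(P^K_s)_{n-i}=b_{n-i}$, the last nonvanishing coordinate of $(0,b)$. The first point I would record is the structural difference from the $G_i$ case treated in Lemma~\ref{red-1}: here the whole $f$-block of $P^K$ vanishes on the fiber, so by the first equation of \eqref{equa-2} we also have $P^{K-1}_f=0$, and therefore the second equation of \eqref{equa-2} degenerates to $P^K_s A_K^{-T}=P^{K-1}_s$ — the $Z_K$-term has disappeared. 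Consequently the reduction cannot be organized exactly as in Lemma~\ref{red-1}; instead the two blocks interchange their roles.

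Concretely I would carry out three steps dual to those of Lemma~\ref{red-1}. First, since $A_K$ is upper unitriangular, $A_K^{-1}$ is upper unitriangular, so reading the coordinates $j=n,n-1,\dots,n-i$ of $P^K_s A_K^{-T}=P^{K-1}_s$ one at a time forces $(P^{K-1}_s)_j=0$ for $j>n-i$ and $(P^{K-1}_s)_{n-i}=b_{n-i}$ — the nonvanishing coordinate propagates backwards, just as $P^{K-1}_i=a_i$ does in Lemma~\ref{red-1} — while reading the coordinates $j=n-i-1,\dots,1$ in this order expresses the $n-i-1$ entries $a_{j,n-i,K}$ of $A_K$, because each such equation is affine in $a_{j,n-i,K}$ with leading coefficient $-b_{n-i}$ and its right-hand side (coming from $P^{K-1}_s$) does not involve $A_K$. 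Second, from $P^{K-1}_f=0$ and \eqref{equa-3} we obtain $P^{K-2}_f+P^{K-2}_s Z_{K-1}=0$; granted that $P^{K-2}_s$ has a nonvanishing coordinate, these $n$ equations express $n$ entries of $Z_{K-1}$, the symmetry of $Z_{K-1}$ being handled verbatim as in Step~2 of Lemma~\ref{red-1}. Third, the remaining $i$ equations $(P^{K-1}_s)_j=0$, $j=n-i+1,\dots,n$, rewritten via $P^{K-1}_s=P^{K-2}_s A_{K-1}$ as $(P^{K-2}_s)_j+\sum_{l<j}(P^{K-2}_s)_l a_{lj,K-1}=0$, express $i$ more entries of $A_{K-1}$. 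A count gives $(n-i-1)+n+i=2n-1$ eliminated variables, leaving exactly the pivot $(P^K_s)_{n-i}=(P^{K-1}_s)_{n-i}=b_{n-i}$; after back-substitution this is a polynomial equation $p(x_1,\dots,x_m)=b_{n-i}$, and, as in \cite{KI} and Lemma~\ref{red-1}, one checks $p$ has degree at most $1$ in each $x_j$, so that the vector fields $V_{ij,p}$ of Section~\ref{sec-stra} span the fiber tangent spaces and produce the dominating spray.

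The step I expect to be the main obstacle is ensuring that the successive pivot coordinates used in the second and third steps are genuinely nonvanishing on $N_i$. In Lemma~\ref{red-1} this was automatic, since $P^K_f$ has $a_i\neq0$ and $A_K$ is unitriangular; here, with the $f$-block identically zero, the coordinate of $P^{K-2}_s$ needed in the second step is only $b_{n-i}$ plus a term that can vanish, so one must either select the pivot coordinate carefully (exploiting, for instance, that $P^1_s=e_n^{T}$ supplies a distinguished nonvanishing coordinate), refine the stratification of $\mathbb{C}^{2n}\setminus\{0\}$, or reorganize the whole argument as an induction on $K$: peel off the last factor $M^+(A_K,Z_K)$, whose $Z_K$ is a free variable on the fiber and whose $A_K$ merely rescales $b$ to $bA_K^{-T}\in N_i$, and reduce to the fiber of $\Phi_{K-1}$ over an $N_i$-point, with the base case $K=3$ done directly. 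In any of these organizations, the concluding degree-$\leq1$ check must also keep track of the extra polynomial dependence introduced by the entries of $A_K^{-1}$ and $A_{K-1}^{-1}$.
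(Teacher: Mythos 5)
Your Step 1 coincides with the paper's: from $b = P^{K-1}_s A_K^{T}$ one reads off $P^{K-1}_{2n-i}=b_{n-i}$ and $P^{K-1}_{n+j}=0$ for $j>n-i$, and solves the first $n-i-1$ components for $a_{k,n-i,K}$, $k=1,\dots,n-i-1$, with the constant pivot $b_{n-i}$. The gap is exactly where you suspect it, in your Steps 2 and 3, and it is not a removable technicality of your bookkeeping: the systems $P^{K-2}_f+P^{K-2}_s Z_{K-1}=0$ and $P^{K-1}_s=P^{K-2}_s A_{K-1}$ both have $P^{K-2}_s$ as coefficient vector, and since $P^{K-2}_s=P^{K-1}_s A_{K-1}^{-1}$, its $(n-i)$-th entry equals $b_{n-i}$ plus a polynomial in the remaining free variables, which does vanish on part of the fiber. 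Hence no fixed entry of $Z_{K-1}$ or of $A_{K-1}$ can be eliminated globally over $N_i$, and "granted that $P^{K-2}_s$ has a nonvanishing coordinate" is an unproved (indeed false, pointwise) hypothesis. Of your three proposed remedies, refining the stratification of $\mathbb{C}^{2n}\setminus\{0\}$ cannot help (the bad locus sits inside a single fiber, not in the base), the appeal to $P^1_s=e_n^{T}$ concerns the wrong factor, and the induction on $K$ changes the parity of the last factor and the base point $bA_K^{-T}$, so none of them is carried out or obviously workable.

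The paper closes the gap by a change of coordinates on the $(K-1)$-st factor: it rewrites $M^-(A_{K-1},Z_{K-1})$ in the reversed order, with the diagonal block first, i.e.\ as $\bigl[\begin{smallmatrix} A^{-T}_{K-1} & 0 \\ 0 & A_{K-1}\end{smallmatrix}\bigr]\bigl[\begin{smallmatrix} I & 0 \\ \tilde{Z}_{K-1} & I\end{smallmatrix}\bigr]$ where $\tilde{Z}_{K-1}=A_{K-1}^{-1}Z_{K-1}A_{K-1}^{-T}$ is again symmetric, and uses the entries $d_{kj}$ of $A_{K-1}^{-1}$ instead of those of $A_{K-1}$. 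The two remaining blocks of equations then read $-P^{K-1}_s\tilde{Z}_{K-1}=P^{K-2}_f A^{-T}_{K-1}$ and $P^{K-1}_s A^{-1}_{K-1}=P^{K-2}_s$: the coefficient vector is now $P^{K-1}_s$, whose $(n-i)$-th entry is the nonzero constant $b_{n-i}$ by Step 1. This lets one express $n$ entries of $\tilde{Z}_{K-1}$ and the $i$ entries $d_{n-i,j}$, $j=n-i+1,\dots,n$, with all denominators equal to $b_{n-i}$, completing the count $(n-i-1)+n+i=2n-1$ and leaving the single equation $P^{K}_{2n-i}=b_{n-i}$. This coordinate change is precisely the device the paper singles out at the end of its proof; without it, or an equivalent substitute, your argument does not close.
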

\begin{proof}
Recall the inductive formula
\begin{align*}
    P^K = P^{K-1} M^+(A_K,Z_K) = P^{K-1} \left[\begin{array}{ccc} I & Z_K \\ 0 & I\end{array}\right] \left[\begin{array}{ccc} A^{-1}_K & 0 \\ 0 & A^T_K\end{array}\right].
\end{align*}
We consider the fiber of $(0,b)$ in $N_i$. Then (\ref{equa-2}) becomes
\begin{align*}
    \begin{cases}
    0 = P^{K-1}_{f} \\
    b A^{-T}_K = P^{K-1}_s  
    \end{cases}
\end{align*}
which is equivalent to
\begin{align} \label{P_f=0}
    \begin{cases}
    0 = P^{K-1}_{f}  \\
    b  = P^{K-1}_s  A^{T}_K
    \end{cases}
\end{align}
Writing out the second equation, we have
\[
    ( b_1, \dots, b_{n-i}, 0, \dots, 0) = P^{K-1}_s 
    \begin{bmatrix}
     1 & 0 & \cdots &0 \\  a_{12,K} & 1 & \ddots & 0 \\
    \vdots & \ddots & \ddots & \vdots \\
    a_{1n,K} & \cdots & a_{n-1,n,K} & 1
    \end{bmatrix}
\]
The last $n+i+1$ components yield
\begin{align} \label{n+i+1}
    P^{K-1}_{2n-i}=b_{n-i}, \, P^{K-1}_{2n-i+1}=0, \, \dots, P^{K-1}_{2n-1} = 0, \, P^{K-1}_{2n} = 0.
\end{align}
Since $b_{n-i}$ is nonzero, we can use the first $n-i-1$ equations to express the variables $a_{k,n-i,K}, k = 1, \dots, n-i-1$ in $A_K$ as in Step 1 of the proof of Lemma \ref{red-1}.

We write $P^{K-1} = P^{K-2} M^-(A_{K-1}, Z_{K-1})$
\begin{center} \label{P^K-1}
    $P^{K-1} = P^{K-2} \left[\begin{array}{ccc} I_n & 0 \\ Z_{K-1} & I_n\end{array}\right] \left[\begin{array}{ccc} A^{-T}_{K-1} & 0 \\ 0 & A_{K-1}\end{array}\right] = P^{K-2} \left[\begin{array}{ccc} A^{-T}_{K-1} & 0 \\ 0 & A_{K-1}\end{array}\right] \left[\begin{array}{ccc} I_n & 0 \\ \tilde{Z}_{K-1} & I_n\end{array}\right]$,
\end{center}
where $\tilde{Z}_{K-1}:= A_{K-1}^{-1} Z_{K-1} A_{K-1}^{-T}$ is also symmetric.

\begin{center}
    $P^{K-1} \left[\begin{array}{ccc} I_n & 0 \\ -\tilde{Z}_{K-1} & I_n\end{array}\right] = P^{K-2} \left[\begin{array}{ccc} A^{-T}_{K-1} & 0 \\ 0 & A_{K-1}\end{array}\right]$
\end{center}
equivalently
\begin{equation}
\begin{cases}
    P^{K-1}_f - P^{K-1}_s \tilde{Z}_{K-1} &= P^{K-2}_f A^{-T}_{K-1} \\
    P^{K-1}_{s} A^{-1}_{K-1} &= P^{K-2}_{s} 
\end{cases}  
\end{equation}
Therefore, the first set of equations from \eqref{P_f=0} and the equations \eqref{n+i+1} are equivalent to 
\begin{align*}
    \begin{cases}
        - P^{K-1}_s \tilde{Z}_{K-1} &= P^{K-2}_f A^{-T}_{K-1} \\
        P^{K-1}_{2n-i} &= b_{n-i} \\
        \sum^{j-1}_{k=1}d_{kj}P^{K-1}_{n+k} & = P^{K-2}_{n+j}, \quad j = n-i+1, \cdots, n  
    \end{cases}
\end{align*}
where we choose 
\begin{center}
     $A^{-1}_{K-1}  = \left[\begin{array}{cccc} 1 & d_{12} & \cdots & d_{1n} \\  0 & 1 & \ddots & \vdots \\
    \vdots & \ddots & \ddots & d_{n-1,n} \\
    0 & \cdots & 0 & 1\end{array}\right]$.
\end{center}
A closer inspection on the last set of equations, together with \eqref{n+i+1}, gives
\begin{align}
    d_{n-i,j} b_{n-i} + \sum^{n-i-1}_{k=1}d_{kj}P^{K-1}_{n+k} = P^{K-2}_{n+j}  \quad \text{ for } j = n-i+1, \dots, n. 
\end{align}
Since $b_{n-i}$ is nonzero, we can use these equations to express $d_{n-i,j}$
\[
     d_{n-i,j} =  (P^{K-2}_{n+j} - \sum^{n-i-1}_{k=1}d_{kj}P^{K-1}_{n+k})/b_{n-i}  \quad  \text{ for } j = n-i+1, \dots, n. 
\]
Since $P^{K-1}_s \neq 0$, the first set of equations $- P^{K-1}_s \tilde{Z}_{K-1} = P^{K-2}_f A^{-T}_{K-1}$ can be used to express $n$ variables in $\tilde{Z}_{K-1}$ as in Step 2 of proof of Lemma \ref{red-1}. 

The introduction of $\tilde{Z}_{K-1}$ and $d_{ij}, i>j$ is another choice of coordinates on the set of unitriangular $\tilde{\Omega}$-symplectic matrices. It allows to reduce the set of $2n$ equations in a simple way to one single equation. 
\end{proof}

For $K$ odd the reduction to one equation is very similar. The stratification of $\mathbb{C}^{2n} \setminus \{0\}$ is as follows:
\begin{align*}
    &\lbrace (a,b) \in \mathbb{C}^n\times \mathbb{C}^n: b_1 = \cdots = b_{i-1} =  0, b_{i} \neq 0  \rbrace \quad \text{ for } i = 1, \dots, n, \\
    &\lbrace (a,0) \in \mathbb{C}^n\times \mathbb{C}^n: a_n = \cdots = a_{n-i+1} =  0, a_{n-i} \neq 0  \rbrace \quad \text{ for } i = 0, \dots, n-1.
\end{align*}

\section{Optimal Factorization of elementary matrices}
In this section we prove that the elementary matrices can be written as a product of unitriangular symplectic matrices with respect to the standard symplectic form. Most importantly, the number of factors is independent of the size of the matrices, i.e., independent of $n$. 

Let $T^+_n(\mathbb{C})$ ($T^-_n(\mathbb{C})$) denote the set of upper (respectively lower) triangular matrices in $\mathrm{GL}_n(\mathbb{C})$, and $UT^+_n(\mathbb{C})$ ($UT^-_n(\mathbb{C})$) the set of upper (resp.\ lower) unitriangular matrices in $\mathrm{GL}_n(\mathbb{C})$. Denote by $Sym_n(\mathbb{C})$ the space of symmetric square matrices of size $n$. 

\begin{lemm}\label{fact-ele-lemm}
Let $A: X \to T^\pm_n(\mathbb{C})$ be a holomorphic map such that the diagonal entries $\lambda_1, \dots, \lambda_n$ are pairwise distinct constants. Then $A$ is holomorphically diagonalizable by a unitriangular matrix, i.e., there exists a holomorphic map $K: X \to UT^\pm_n(\mathbb{C})$ such that $A = K\Lambda K^{-1}$, $\Lambda = \mathrm{diag}(\lambda_1, \dots, \lambda_n)$.   
\end{lemm}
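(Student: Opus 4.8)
The plan is to diagonalize $A$ by producing the columns of $K$ as holomorphically varying eigenvectors, exploiting the fact that an upper triangular matrix preserves the standard flag. I treat the case $A \colon X \to T^+_n(\mathbb{C})$; the lower triangular case follows by passing to transposes, since $A$ lower triangular with pairwise distinct constant diagonal gives $A^T$ upper triangular of the same type, and if $A^T = K\Lambda K^{-1}$ with $K$ upper unitriangular, then $A = K^{-T}\Lambda K^{T}$ with $K^{-T} \in UT^-_n(\mathbb{C})$.

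Write $A(x) = (a_{jl}(x))$ with $a_{jj} \equiv \lambda_j$ and $a_{jl} \equiv 0$ for $j > l$. For each $i \in \{1,\dots,n\}$ I look for an eigenvector of $A$ for the eigenvalue $\lambda_i$ of the special form
\[
    k_i = e_i + \sum_{j<i} c_{ji}\, e_j ,
\]
so that the matrix $K$ with $i$-th column $k_i$ is automatically upper unitriangular. Since $A$ is upper triangular, the linear span $V_i$ of $e_1,\dots,e_i$ is $A$-invariant and $\lambda_i$ is an eigenvalue of $A|_{V_i}$, so such a $k_i$ exists pointwise; the content of the lemma is to make this choice holomorphic and unitriangular simultaneously. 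Writing $(A - \lambda_i I)k_i = 0$ row by row: the rows $m > i$ hold automatically, the row $m = i$ reads $\lambda_i = \lambda_i$, and for $m < i$ one obtains
\[
    (\lambda_m - \lambda_i)\, c_{mi} = -\sum_{l=m+1}^{i} a_{ml}\, c_{li}, \qquad c_{ii} := 1 .
\]
Because the $\lambda_j$ are pairwise distinct, $\lambda_m - \lambda_i \neq 0$, so this triangular system can be solved by downward recursion in $m$, starting from $c_{ii}=1$:
\[
    c_{mi} = \frac{1}{\lambda_i - \lambda_m}\sum_{l=m+1}^{i} a_{ml}\, c_{li}, \qquad m = i-1, i-2, \dots, 1 .
\]
At each step $c_{mi}$ is a polynomial in the holomorphic functions $a_{ml}$ and the previously constructed $c_{li}$, divided by a nonzero constant, hence holomorphic on $X$.

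Assembling $K = (k_1 \mid \cdots \mid k_n) \colon X \to UT^+_n(\mathbb{C})$, we have $Ak_i = \lambda_i k_i$ for all $i$, i.e.\ $AK = K\Lambda$; since $K$ is unitriangular it has constant determinant $1$, so $K^{-1}$ is again holomorphic (and unitriangular), whence $A = K\Lambda K^{-1}$. I do not expect a genuine analytic obstacle: the only delicate point is that the diagonalizing matrix can be taken \emph{unitriangular} rather than merely invertible, and this is precisely what the flag-invariance of $A$ — together with the fact that $\lambda_i$ is the ``top'' eigenvalue of $A|_{V_i}$ — delivers, while the pairwise distinctness of the $\lambda_j$ is exactly what keeps every denominator in the recursion nonzero.
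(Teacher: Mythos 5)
Your proof is correct and takes essentially the same route as the paper's: both solve the linear system $AK = K\Lambda$ for the above-diagonal entries of a unitriangular $K$, with the pairwise distinctness of the $\lambda_i$ guaranteeing that the only denominators $\lambda_i - \lambda_m$ are nonzero constants. The only difference is bookkeeping — you recurse column by column (downward in $m$) while the paper recurses superdiagonal by superdiagonal — and your explicit recursion formula plus the transpose reduction for the lower-triangular case are fine, slightly more detailed versions of the same argument.
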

\begin{proof}
    Let us consider the case $A = (a_{ij})$ is upper triangular. 
    Let $K$ be an upper unitriangular matrix and $\alpha_{ij}, 1 \le i < j \le n$ the entries of $K$ which are above the diagonal. The set of linear equations given by $A K = K \Lambda$ can be uniquely solved for $\alpha_{i, i+1}$ using the equation at position $(i, i+1)$
    \[
        a_{i, i+1} + \lambda_{i+1} \alpha_{i, i+1} = \lambda_i \alpha_{i, i+1}.
    \]
    After having solved the first superdiagonal, continue with the second superdiagonal, and so on. 
\end{proof}

\begin{theo}\label{fact-ele}
    Given holomorphic maps $A: X \to UT^+_n(\mathbb{C})$ and $Z: X \to Sym_n(\mathbb{C})$, there exist holomorphic maps $Z_i, \tilde{Z}_i : X \to Sym_n(\mathbb{C}), i = 1, \dots, 7$ such that
    \begin{align*}
        M^-(A, Z) &= 
        \begin{bmatrix} I & 0 \\ Z & I   \end{bmatrix}
        \begin{bmatrix} A^{-T} & 0 \\ 0 & A  \end{bmatrix}
        =
        \begin{bmatrix} I & 0 \\ Z_1 & I   \end{bmatrix}
        \begin{bmatrix} I & Z_2 \\ 0 & I   \end{bmatrix}
        \cdots 
        \begin{bmatrix} I & 0 \\ Z_7 & I   \end{bmatrix} \\
        &=
        M^-(Z_1) M^+(Z_2) \cdots M^-(Z_7)
        \quad  \text{ and } \\
        M^+(A, Z) &= 
        \begin{bmatrix} I & Z \\ 0 & I   \end{bmatrix}
        \begin{bmatrix} A^{-1} & 0 \\ 0 & A^{T}  \end{bmatrix}
        =
        \begin{bmatrix} I & \tilde{Z}_1 \\ 0 & I   \end{bmatrix}
        \begin{bmatrix} I & 0 \\ \tilde{Z}_2 & I   \end{bmatrix}
        \cdots 
        \begin{bmatrix} I & \tilde{Z}_7 \\ 0 & I   \end{bmatrix} \\
        &=
        M^+(\tilde{Z}_1) M^-(\tilde{Z}_2) \cdots M^+(\tilde{Z}_7).
    \end{align*}
\end{theo}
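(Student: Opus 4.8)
The plan is to reduce everything to the $\mathrm{SL}_n$ factorization problem by exploiting Lemma~\ref{fact-ele-lemm}. The key observation is that the block-diagonal matrix $\mathrm{diag}(A^{-T}, A)$ is an $\Omega$-symplectic matrix whose factorization into $\Omega$-unitriangular blocks is governed by the factorization of $A$ itself into elementary matrices of $\mathrm{SL}_n(\mathcal{O}(X))$. So first I would multiply the unitriangular matrix $A$ by a constant diagonal matrix $\Lambda = \mathrm{diag}(\lambda_1, \dots, \lambda_n)$ with pairwise distinct nonzero entries, so that $\Lambda A \in T^+_n(\mathbb{C})$ has pairwise distinct constant diagonal entries. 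By Lemma~\ref{fact-ele-lemm}, $\Lambda A = K \Lambda K^{-1}$ for a holomorphic $K : X \to UT^+_n(\mathbb{C})$, hence $A = \Lambda^{-1} K \Lambda K^{-1}$. This writes $A$ as a product of two constant-diagonalizable-type pieces; the point is that the diagonalization is now \emph{algebraic/holomorphic}, circumventing the discontinuity of Jordan normal form.

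Next I would translate the resulting factorization of $A$ (equivalently of $\mathrm{diag}(A^{-T}, A)$) into a product of $\Omega$-unitriangular symplectic factors using the standard $\mathrm{SL}_n$-to-symplectic dictionary: a block $\mathrm{diag}(B^{-T}, B)$ with $B$ unitriangular (upper or lower) is a short product of $M^\pm$'s via the identities relating elementary $\mathrm{SL}_n$ matrices $I + \alpha e_{ij}$ to symplectic unitriangular matrices, and a matrix of the form $\mathrm{diag}(\Lambda^{-T}, \Lambda)$ with $\Lambda$ diagonal, together with $K\Lambda K^{-1}$ type conjugations, can be absorbed. Here I would invoke the factorization count from \cite{JTZ}: over a field (or a ring of stable rank $1$) the symplectic group factors into $5$ $\Omega$-unitriangular factors, and their argument, applied to the diagonalizable matrix $\Lambda A$ (which has the required normal form holomorphically), yields a fixed number of factors independent of $n$. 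Since $Z$ is symmetric, the leftmost factor $\begin{bmatrix} I & 0 \\ Z & I\end{bmatrix}$ is already $M^-(Z)$, so it can be merged with the first factor coming from the block-diagonal part (both are of $M^-$-type, so their product is again $M^-$ of the sum), which is what brings the total down to $7$ rather than $8$. For $M^+(A,Z)$ the argument is identical after transposing, or by conjugating with the antidiagonal permutation that swaps $M^+$ and $M^-$.

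The main obstacle I expect is bookkeeping the exact number of factors: naively one gets $8$, and the reduction to $7$ requires carefully choosing the order in the $\mathrm{SL}_n$ factorization so that the outermost symplectic factor on the side where $Z$ sits is of the same type ($M^-$ for $M^-(A,Z)$, $M^+$ for $M^+(A,Z)$), allowing the symmetric $Z$-block to be absorbed for free. A secondary point requiring care is ensuring that all the intermediate matrices ($K$, $K^{-1}$, $\Lambda^{-1}$, the symmetric blocks produced by Proposition~\ref{fact-1}-type factorizations) are genuinely holomorphic on all of $X$ — but this is automatic since $K$ is unitriangular (hence $K^{-1}$ is a polynomial in the entries of $K$), $\Lambda$ is constant and invertible, and $A^{\pm 1}$ are polynomial in the entries of $A$; no division by possibly-vanishing functions occurs anywhere. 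Once the factor count and holomorphy are pinned down, the symmetry of each produced off-diagonal block (needed so that each factor is genuinely $M^\pm$ of a symmetric matrix) follows from the symplectic relations in Proposition~\ref{fact}.
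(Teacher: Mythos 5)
Your proposal follows essentially the same route as the paper: perturb $A$ by a constant diagonal $\Lambda$ with distinct entries, use Lemma~\ref{fact-ele-lemm} to diagonalize holomorphically, apply the constant-coefficient factorization machinery of Jin--Tang--Zhu / Jin--Lin--Xiao to the block-diagonal pieces, and absorb the symmetric $Z$-block and adjacent same-type factors to land on $7$. The only detail you leave implicit is how the JTZ-type count is actually accessed -- the paper writes $\Lambda^{-1}A = K\Lambda^{-1}K^{-1} = (KK^T)(K^{-T}\Lambda^{-1}K^{-1})$ as a product of two holomorphic symmetric matrices and then invokes the four-factor identity of \cite[Lemma 1]{JLX2}; your alternative suggestion of factoring $A$ into elementary matrices $I+\alpha e_{ij}$ would give an $n$-dependent count and should be discarded in favor of the symmetric-product route.
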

\begin{proof}
First we make the diagonal entries of $A$ pairwise distinct by multiplying it with a constant diagonal matrix $\Lambda$ with such property
\begin{center}
    $\left[\begin{array}{ccc} A & 0 \\ 0 & A^{-T}\end{array}\right] = \left[\begin{array}{ccc} \Lambda & 0 \\ 0 & \Lambda^{-1}\end{array}\right]
    \left[\begin{array}{ccc} \Lambda^{-1}A & 0 \\ 0 & \Lambda A^{-T}\end{array}\right] 
    =  
    \left[\begin{array}{ccc} \Lambda & 0 \\ 0 & \Lambda^{-1}\end{array}\right]
    \left[\begin{array}{ccc} \Lambda^{-1}A & 0 \\ 0 & (\Lambda^{-1} A)^{-T}\end{array}\right]$.
\end{center}
The constant-valued factor decomposes as
\[
    \begin{bmatrix} \Lambda & 0 \\ 0 & \Lambda^{-1}   \end{bmatrix}
    = 
    \begin{bmatrix} I & 0 \\ -\Lambda^{-1}  & I  \end{bmatrix}
    \begin{bmatrix} I & \Lambda - I \\ 0 & I  \end{bmatrix}
    \begin{bmatrix} I & 0 \\ I & I  \end{bmatrix}
    \begin{bmatrix} I &  \Lambda^{-1} - I \\ 0 & I  \end{bmatrix}.
\]
By Lemma \ref{fact-ele-lemm} there exists a holomorphic map $K : X \to UT^+_n(\mathbb{C})$ so that $\Lambda^{-1}A = K\Lambda^{-1} K^{-1}$. 
Write $P_1 = KK^T$, $P_2 =  K^{-T} \Lambda K^{-1}$, then $\Lambda^{-1}A = P_1 P_2$ and $(\Lambda^{-1} A)^{-T} = P_1^{-1} P_2^{-1}$. 
As in \cite[Lemma 1]{JLX2}, we can write the second factor, starting with an upper unitriangular matrix, as
\begin{center}
    $\left[\begin{array}{ccc} P_1 P_2 & 0 \\ 0 & P_1^{-1} P_2^{-1}\end{array}\right] 
    =  
    \left[\begin{array}{ccc} I & -P_1 \\ 0 & I\end{array}\right]
    \left[\begin{array}{ccc} I & 0 \\ P_1^{-1} - P_2 & I\end{array}\right]
    \left[\begin{array}{ccc} I & P_2^{-1} \\ 0 & I \end{array}\right]
    \left[\begin{array}{ccc} I & 0 \\ P_2 P_1 P_2 - P_2 & I\end{array}\right]$.
\end{center}
Combining these two factorizations, taking into account the lower triangular form of the first factor in $M^-$, gives a decomposition of $M^-$ into $7$ factors. For $M^+$ observe that
\[
    \begin{bmatrix} \Lambda & 0 \\ 0 & \Lambda^{-1}   \end{bmatrix}
    = 
    \begin{bmatrix} I & \Lambda - I \\ 0 & I  \end{bmatrix}
    \begin{bmatrix} I & 0 \\ I & I  \end{bmatrix}
    \begin{bmatrix} I &  \Lambda^{-1} - I \\ 0 & I  \end{bmatrix}
    \begin{bmatrix} I & 0 \\ -\Lambda  & I  \end{bmatrix} \quad \text{ and }
\]
\begin{center}
    $\left[\begin{array}{ccc} P_1 P_2 & 0 \\ 0 & P_1^{-1} P_2^{-1} \end{array}\right] 
    =  
    \left[\begin{array}{ccc} I & 0 \\ P_1^{-1}P_2^{-1}P_1^{-1}-P_1^{-1} & I\end{array}\right]\left[\begin{array}{ccc} I & P_1 \\ 0 & I\end{array}\right]\left[\begin{array}{ccc} I & 0 \\ P_2-P_1^{-1} & I\end{array}\right]\left[\begin{array}{ccc} I & -P_2^{-1} \\ 0 & I \end{array}\right]$. \qedhere
\end{center}
\end{proof}

\begin{rema}\label{fact-num-theo1}
    (a) When $n=2$ or $n=3$ in Theorem \ref{fact-ele}, direct computation shows that $4$ instead of $7$ factors suffice: $M^-(A, Z) = 
        M^-(Z_1) M^+(Z_2) M^-(Z_3) M^+(Z_4)$ and $M^+(A, Z) = 
        M^+(\tilde{Z}_1) M^-(\tilde{Z}_2) M^+(\tilde{Z}_3) M^-(\tilde{Z}_4)$. \\
    (b) For $n \ge 4$, direct computation shows that $5$ factors are not sufficient. Thus the number of factors for factoring $M^\pm(A, Z)$ as a product of elementary symplectic matrices with respect to the standard symplectic form is at least $6$. 
\end{rema}

\begin{proof}[Proof of Theorem \ref{main-theo2}]
By Theorem \ref{fact-ele}, $\mathrm{K}_{symp}(n,d) \leq 7 \tilde{\mathrm{K}}_{symp}(n,d)$. 
Then by Remark \ref{fact-num-theo1} (b), $6 \leq \mathrm{K}_{symp}(n,d)$.
In the case $n \leq 3$, by Remark \ref{fact-num-theo1} (a), $\mathrm{K}_{symp}(n,d) \leq 4\tilde{\mathrm{K}}_{symp}(n,d)$. 
\end{proof}

\begin{proof}[Proof of Proposition \ref{bounds}]
    The inequality \eqref{K(N,d)} follows from \cite[Theorem 2.4]{HKS} and Theorem \ref{main-theo2}.
    By \cite[Theorem 1]{HKS}, 
    \[
        \tilde{K}_{symp}(n,1) = 4 \text{ and }\tilde{K}_{symp}(n,2) \le 5 \text{ for all } n.
    \]
    Together with the inequalities \eqref{K-Ktilde}, this shows the upper bounds in \eqref{K(n,1)n=2,3}, \eqref{K(n,1)n>=4}, \eqref{K(n,2)n=2,3} and \eqref{K(n,2)n>=4}. 
     The lower bounds in \eqref{K(n,1)n=2,3} and \eqref{K(n,2)n=2,3} hold even for matrices with constant entries \cite{JTZ}, while the lower bounds in \eqref{K(n,1)n>=4} and \eqref{K(n,2)n>=4} follow from Theorem \ref{main-theo2}. 
\end{proof}

\section*{Funding}

All authors were partially supported by Schweizerischer Nationalfonds grant 200021-207335.


\end{document}